\newcommand{\R}{\mathbb{R}}
\newcommand{\E}{\mathbb{E}}
\newcommand{\es}{\mathbb{S}}
\newcommand{\1}{\mathbb{1}}
\renewcommand{\P}{\mathbb{P}}
\newcommand{\Z}{\mathbb{Z}}
\newcommand{\N}{\mathbb{N}}
\newcommand{\T}{\mathbb{T}}
\newcommand{\setdef}{\stackrel {\rm {def}}{=}}
\newtheorem{thank}{\ \ \ Acknowledgment}
\newcounter{tictac}
\def\1{\,\rlap{\mbox{\small\rm 1}}\kern.15em 1}
\def\build#1_#2^#3{\mathrel{\mathop{\kern 0pt#1}\limits_{#2}^{#3}}}
\def\tend#1#2{\build\hbox to 12mm{\rightarrowfill}_{#1\rightarrow #2}^{ }}
\def\converge#1#2#3#4{\build\hbox to
#1mm{\rightarrowfill}_{#2\rightarrow #3}^{\hbox{\scriptsize #4}}}
\theoremstyle{definition}
\newtheorem{thm}{Theorem}[section]
\newtheorem{prop}[thm]{Proposition}
\newtheorem{lemm}[thm]{Lemma}
\newtheorem{defn}[thm]{Definition}
\newtheorem{Prop}[thm]{Proposition}
\newtheorem{rem}[thm]{Remark}
\newtheorem{Cor}[thm]{Corollary}
\newcommand{\cp}{{\mathcal P}}
\newcommand{\vep}{\varepsilon}
\newcommand{\ov}{\overline}
\newcommand{\beq}{\begin{equation}}
\newcommand{\eeq}{\end{equation}}
\newcommand{\xbm}{(X,{\mathcal B},\mu)}
\begin{document}
\title[Approximate transitivity property and Lebesgue spectrum]{Approximate transitivity property and Lebesgue spectrum}
\author{E. H. El Abdalaoui}
\address{ Department of Mathematics, University
of Rouen, LMRS, UMR 60 85, Avenue de l'Universit\'e, BP.12, 76801
Saint Etienne du Rouvray - France}
\email{elhoucein.elabdalaoui@univ-rouen.fr }

\author {M. Lema\'nczyk }
\address{Faculty of Mathematics and Computer Science, Nicolaus Copernicus University, Chopina 12/18, 87-100, Toru\'n,
and Institute of Mathematics, Polish Academy of Sciences,
\'Sniadeckich 8, 00-950 Warszawa, Poland}
\email{mlem@mat.uni.torun.pl} \footnote{Research partially
supported by Polish MNiSzW grant N N201 384834, Marie Curie
"Transfer of Knowledge" EU program -- project MTKD-CT-2005-030042
(TODEQ) and MSRI (Berkeley) program ``Ergodic Theory and
Commutative Number Theory"}

\maketitle

{\renewcommand\abstractname{Abstract}
\begin{abstract}
Exploiting a spectral criterion for a system not to be AT we give
some new examples of zero entropy systems without the AT property.
Our examples include  those with finite spectral multiplicity --
in particular we show that the system arising from the
Rudin-Shapiro substitution is not AT. We also show that some
nil-rotations on a quotient of the Heisenberg group as well as
some (generalized) Gaussian systems are not AT. All known examples
of non AT-automorphisms contain a Lebesgue component in the
spectrum.
\vspace{8cm}\\

\hspace{-0.7cm}{\em AMS Subject Classifications} (2000): 37A15, 37A25, 37A30.\\
{\em Key words and phrases:} ergodic theory, dynamical system,
 approximate transitivity property, Lebesgue spectrum.\\
\end{abstract}
\thispagestyle{empty}
\newpage
\section{Introduction}

In this article we deal with the {\em approximate transitivity}
property (AT property for short) in ergodic theory. This property
has been introduced by A. Connes and G.J.\ Woods in {\cite
{Connes-woods}} in connection with some classification problems of
factors of type $III_0$ in the theory of von Neumann algebras. We
recall now the definition and some basic facts.

Let $G$ be an Abelian countable (discrete) group acting as measure
preserving transformations: $g\mapsto T_g$, on a standard
probability Borel space $(X,\mathcal{B},\mu)$. This action is
called AT (or AT(1)) if for an arbitrary family of nonnegative
functions $f_1,\ldots,f_l \in L_{+}^1(X,\mathcal{B},\mu), l \geq
2,$ and any $\varepsilon
>0$, there exist a positive integer $s$, $g_1,\ldots,g_s \in G$,
$\lambda_{j,k} \geq 0, j=1,\ldots l, k=1, \ldots, s$ and $f \in
L_{+}^1(X,\mathcal{B},\mu)$ such that
\begin{equation}\label{cw1}
 \|f_j-
\sum_{k=1}^{s}\lambda_{j,k}f\circ T_{g_k}\|_1 < \varepsilon,~~~~ 1
\leq j \leq l.
\end{equation}
In fact, in (\ref{cw1}) we can restrict ourselves to take only
$l=2$; indeed, given $\varepsilon>0$ we apply~(\ref{cw1}) to
$f_1,f_2$ and obtain $f=f_{12}$, then we apply again~(\ref{cw1})
to $f_{12}$ and $f_3$ for some $\varepsilon'$ sufficiently small
and obtain $f_{123}$ and we conclude after $l-1$ steps.

Only few general facts about AT-systems in ergodic theory are
known. The AT property forces the system to be ergodic and to have
zero entropy \cite{Da}, {\cite{Connes-woods}}, \cite{Dooley-Quas}. Moreover funny rank~1
systems enjoy the AT property {\cite
{Connes-woods}}, \cite{Sokhet},\cite{Sokhet2}.
Clearly, the class of AT-systems is closed under taking factors
and inverse limits (and roots for $\Z$-actions).

The action of $G$ on $\xbm$ induces a (continuous) unitary
representation, called {\em Koopman representation}, of $G$ in the
space $L^2(X,\mathcal{B},\mu)$ given by $U_{T_g}f = f\circ T_g, f
\in L^2(X,\mathcal{B},\mu)$ and $g \in G$. Recall that such a
representation is said to have {\em simple spectrum} if
$L^2_0\xbm=G(f)$ where $G(f)$ stands for the {\em cyclic space}
generated by~$f$, i.e.\ $G(f)=\overline{\mbox{span}}\{f\circ
T_g:\:g\in G\}$. In view of the definition of the AT property it
is natural to ask whether it already  implies simplicity of the
spectrum -- this question appeared (or is treated implicitly) in
several papers, see David \cite{Da}, Hawkins
\cite{Hawkins}, Hawkins-Robinson \cite{HawkinsR}, Golodets \cite{Golodets} and Dooley-Quas
\cite{Dooley-Quas}. This is still an
open problem also for $G=\Z$. A stronger conjecture due to Dooley
and Quas \cite{Dooley-Quas} is that AT-systems are exactly funny
rank-1 systems (this latter class is known to be a subclass of
simple spectrum actions). This conjecture is based on the fact
that a criterion for a system not be AT given in
\cite{Dooley-Quas} which we repeat in Section~\ref{crit} is also
sufficient for a system not to be of funny rank-$1$.

As in the definition of the AT property we deal with
$L^1$-functions, one can also ask about simplicity of the spectrum
in $L^1\xbm$ for the induced action of $G$ on $L^1$, that is we
ask whether  there exists a function $f \in L^1\xbm$ for which the
linear span of the functions $f\circ T_g$, $g \in G$, is dense in
$L^1$. The conjecture of Thouvenot from the 1980th states that
each ergodic automorphism has a simple $L^1$-spectrum (see also
related works on $L^p$-multiplicities by Iwanik
\cite{Iwanik1,Iwanik2} and Iwanik-Sam de Lazaro \cite{IwanikL}).
Thouvenot himself observed that for $G=\Z$ AT-automorphisms have
simple $L^1$-spectrum; indeed, all we need to show is  that given
$g,h\in L^1\xbm$ and $\vep>0$ we can find $f\in L^1\xbm$ such that
$$d(g,f)<\vep\;\;\mbox{and} \;\;d(h,\Z(f))<\vep$$
because then the open set $\{f\in L^1\xbm:\:d(h,\Z(f))<\vep\}$ is
dense, and we can use a Baire type argument. Now by using the AT
property, we can easily arrive at a situation that
$d(g,P(U_T)(f'))<\vep'$ and $d(h,Q(U_T)(f'))<\vep'$ where $P,Q$
are trigonometric polynomials. By replacing (in the space $A(\T)$)
$P$ by a another trigonometric polynomial we can assume that $P$
has no zeros on the circle and we simply put $f=P(U_T)f'$ noticing
that the cyclic space generated by $f$ is the same as the one
generated by $f'$; indeed  $1/P(z)$ also belongs to the space
$A(\T)$.

In fact, it was unknown until very recently that a system with
zero entropy without AT property could exist ($G=\Z$). In
\cite{Dooley-Quas} two examples of zero entropy non AT-systems are
exhibited. For both of them the associated Koopman operator has a
Lebesgue component in the spectrum,  moreover the component has
infinite multiplicity. In connection with that two natural
questions arises. Can we find a non AT ergodic automorphism whose
Koopman operator has a finite multiplicity? Does the AT property
imply singularity of the spectrum? We will give the positive
answer to the first question;  we have been unable to answer the
second one -- recall that  even in the class of rank-1
transformations it is unknown whether the spectrum has to be
singular.

In the present paper we will prove a criterion (see
Proposition~\ref{noat} below) for a system not to be AT which is
an elaborated version of an argument implicitly contained in
\cite{Dooley-Quas}. Our criterion looks spectral and should work
in case of an automorphism with a ``good'' Lebesgue component in
the spectrum. However we require for some function of type
$\chi_{P_0}-\chi_{P_1}$ ($(P_0,P_1)$ is a partition of $X$) to
have the spectral measure absolutely continuous with a good
control of its density which makes the use of the criterion ``in
practice" a delicate task. We will go through many known
constructions of zero entropy dynamical systems having a Lebesgue
component in the spectrum and  show that they or systems ``close"
to them are not AT. It should be mentioned that for all known non
AT systems the absence of approximate transitivity turned out to
be a consequence of the criterion, including the non AT property
of positive entropy automorphisms (see
Corollary~\ref{nonATpositive} below).

We show that the automorphism given by the Rudin-Shapiro
substitution is not AT but it has a finite multiplicity (more
precisely, the associated Koopman operator has the maximal
spectral multiplicity equal to~2 \cite{lemihp},
\cite{Mathew-Nadkarni}, \cite{Queffelec}). Recently Giordano and
Handelman \cite{Giordano} introduced and studied the following
notion of AT$(n)$.

\begin{defn} Let $(X,\mathcal {B},\mu,T)$ be a dynamical system and $n$ be a positive
integer. The transformation $T$ is AT$(n)$ if for any $\varepsilon
> 0$, for any set of $n+1$ nonnegative functions $\{f_i\}_{i=1}^{n+1}
\subset  L_{+}^1(X, \mu),$ there exist $n$ nonnegative functions
$g_m, m=1,\ldots,n$,  a positive integer $N$,   positive
coefficients $(\alpha^{(m)}_{i,j})^{m=1,\ldots,n}_{i=1
\ldots,n+1;j=1 \ldots n}$ and a finite sequence of integers
$\{t^{(m)}_j\}^{m=1,\ldots,n}_{j=1,\ldots,N}$ such that
\[
\|f_i -\sum_{m=1}^{n}\sum_{j=1}^{N}\alpha^{(m)}_{i,j}g_m\circ
T^{t^{(m)}_j}\|_1 < \varepsilon
\]
for all $i \in \{1, \ldots,n+1\}$.
\end{defn}

Clearly, each AT$(n)$ system  enjoys AT$(n+1)$ property.  It is
easy to see that the transformation with rank~$n$ (see e.g.\
\cite{Fe1} for the relevant definitions and properties) is AT$(n)$
and it is well known that the system determined by a substitution
on~$k$ symbols has the rank at most~$k$. In the case of
Rudin-Shapiro substitution the corresponding automorphism has
rank~$4$ (see \cite{Me}). It follows that the automorphism given
by the Rudin-Shapiro substitution is a natural example of a system
which is AT$(4)$ but not AT$(1)$, see \cite{Giordano} for other
examples  of that type.

Moreover we show that each ergodic system has an ergodic  distal
(see \cite{Fu}) extension which is not AT.  Our method  also shows
that Helson and Parry's ``random'' construction from \cite{He-Pa}
of a $2$-point extension of an arbitrary (aperiodic) system with a
Lebesgue component is ``almost" non-AT; actually (on a set of
positive measure of parameters) its $4$th power is not AT. We also
show that some nil-translations on a quotient of the Heisenberg
group are not AT. Furthermore we show that the non AT property for
affine transformations of the torus enjoy some stability property.
Finally we will deal with the non AT property in the class of
Gaussian systems. We will give examples of zero entropy (mixing)
Gaussian systems $T$ such that $T\times T\times T\times T$ is not
AT. Extensions of Gaussian systems via cocycles are treated in the
last section.

\section{A necessary combinatorial condition to have AT
property}\label{crit}

Let $G$ be a countable Abelian (discrete) group acting measurably
on a standard probability Borel space $(X,\mathcal{B},\mu)$ by
measure-preserving transformations $g\mapsto T_g$, $g\in G$. Let
$\mathcal{P}=\{P_0,P_1\}$ be a (measurable) partition of $X$. Then
each point $x\in X$ has its $\mathcal{P}$-name $\pi(x)=(x_g)_{g\in
G}\in\{0,1\}^G$, where
\[
x_g=\left\{
     \begin{array}{ll}
       0 & \hbox{if $T_g(x) \in P_0$;} \\
       1 & \hbox{if not.}
     \end{array}
   \right.
\]

Let $\Lambda$ be a finite set in $G$. We define a {\em funny word
on the alphabet} $\{0,1\}$ {\em based on} $\Lambda$ as a finite
sequence $(W_g)_{g \in \Lambda}$, $W_g \in \{0,1\}$. For any two
funny words $W,W'$ based on the same set $\Lambda$ their {\em
Hamming distance} is given by
\[
\overline{d}_{\Lambda}(W,W')=\frac1{|\Lambda|} \mbox{card}\left\{
i \in \Lambda ~~:~~W_i \neq W'_i \right \}.
\]

The proposition below gives a necessary condition for an action to
be AT. The proof of it follows word by word the proof given by
Dooley and Quas \cite{Dooley-Quas} in the case of $\Z$-actions.

\begin{prop}{(\cite{Dooley-Quas})}\label{fanny}
Let $(X,\mathcal{B},\mu,G)$ be an AT dynamical system. Then for
any $\varepsilon >0$, there exist a finite set  $\Lambda \subset
G$ of arbitrary large cardinality and a funny word $W$  based on
$\Lambda$ such that
\[
|\Lambda|\mu\{x\in X:\:
\overline{d}(\pi(x)|_{\Lambda},W)<\varepsilon\}>1-\varepsilon.
\]
\end{prop}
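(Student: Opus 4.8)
The plan is to start from the AT property applied to the two indicator functions $f_1=\chi_{P_0}$, $f_2=\chi_{P_1}$ and extract the asserted combinatorial statement by a pigeonhole/counting argument. First I would fix $\varepsilon>0$ and apply the AT property (in its $l=2$ form) to $f_1=\chi_{P_0}$ and $f_2=\chi_{P_1}$ with a smaller tolerance $\delta$ to be calibrated later: there exist $s\in\N$, $g_1,\dots,g_s\in G$, nonnegative scalars $\lambda_{j,k}$ and a single nonnegative $f\in L^1_+(X,\mathcal B,\mu)$ with
\[
\Bigl\|\chi_{P_j}-\sum_{k=1}^s\lambda_{j,k}\,f\circ T_{g_k}\Bigr\|_1<\delta,\qquad j=1,2.
\]
Since $\chi_{P_0}+\chi_{P_1}=\1$, adding the two inequalities shows $\sum_k(\lambda_{1,k}+\lambda_{2,k})f\circ T_{g_k}$ is within $2\delta$ of $\1$ in $L^1$, so in particular $\sum_k(\lambda_{1,k}+\lambda_{2,k})\int f\,d\mu$ is close to $1$; after normalizing $f$ (replacing $f$ by $f/\int f\,d\mu$ and rescaling the $\lambda$'s) we may assume $\int f\,d\mu=1$.

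Next I would pass from the $L^1$-approximation to a pointwise picture. The key idea is that the candidate set $\Lambda$ should be built out of (a large number of copies of) the translates $g_1,\dots,g_s$, weighted so as to reflect the coefficients $\lambda_{j,k}$, and the funny word $W$ should record, for each such translate, whether the approximant $\sum_k\lambda_{j,k}f\circ T_{g_k}$ "votes" for symbol $0$ or symbol $1$. More precisely, for $\mu$-a.e.\ $x$ consider the two numbers $a_j(x)=\sum_k\lambda_{j,k}f(T_{g_k}x)$ for $j=0,1$; the $L^1$ estimates say $a_j$ is close to $\chi_{P_j}$ in $L^1$, hence by Markov's inequality $a_0(x)$ is close to $1$ and $a_1(x)$ close to $0$ on a set of measure $>1-\sqrt\delta$ (and symmetrically on the complement). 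On this good set the symbol $x_g$ (which is $0$ iff $T_gx\in P_0$) is "predicted" by which of $a_0(x),a_1(x)$ dominates. Integrating $f$ against the measure $\mu$ and using $\int f=1$, one shows that for a $f\,d\mu$-typical point the empirical distribution of symbols along the orbit piece $\{T_{g_k}x\}$, weighted by the $\lambda$'s, matches the name of a typical $x$. I would then choose $\Lambda$ to be the multiset $\{g_k\}$ with multiplicities proportional to $\lambda_{1,k}$ (rounded to integers, which forces $|\Lambda|$ to be large once the rationals $\lambda_{1,k}$ are scaled up — this is where arbitrary large cardinality comes from, at the cost of a further small error), and $W$ to be the corresponding string of predicted symbols for a single well-chosen base point $x_0$ drawn from the good set; then $\overline d(\pi(x)|_\Lambda,W)<\varepsilon$ for a $\mu$-positive (indeed $>1-\varepsilon$) set of $x$, and multiplying through by $|\Lambda|$ gives the stated inequality after recalibrating constants.

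The main obstacle I anticipate is the bookkeeping in turning the weighted $L^1$-inequality into an honest Hamming-distance statement for an unweighted funny word: the $\lambda_{j,k}$ are arbitrary nonnegatives, so one must approximate them by rational numbers with a common denominator $D$, replace each $g_k$ by $D\lambda_{1,k}$ (rounded) copies of itself inside $\Lambda$, and verify that the rounding errors, summed over $k$, contribute at most $O(\varepsilon)$ to the Hamming distance — this requires the normalization $\sum_k\lambda_{1,k}\int f\approx1$ to control $\sum_k\lambda_{1,k}$. A secondary point requiring care is that $f$ need not be bounded, so the "prediction" argument must be run with a Fubini/Markov estimate ($\int\!\!\int f(T_{g_k}x)\,\1_{\{T_{g_k}x\in P_1\}}\,d\mu(x)$ is small) rather than a crude sup bound; this is exactly the step where the single common function $f$ in the AT definition (as opposed to $l$ separate ones) is essential. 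Since the proposition is attributed to Dooley–Quas and the excerpt explicitly says the proof is "word by word" theirs, I expect the write-up to cite \cite{Dooley-Quas} for the $\Z$-case and simply remark that no property of $\Z$ beyond the formal manipulation above was used, so the argument transfers verbatim to a general countable abelian $G$.
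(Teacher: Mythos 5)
Your general scaffolding --- apply the AT property to $f_1=\chi_{P_0}$, $f_2=\chi_{P_1}$, normalize $\int f\,d\mu=1$, and convert the $L^1$ estimates into a statement about names via a Fubini/Markov argument --- is the right starting point (note that the paper gives no proof of its own: it states that the argument is word for word that of Dooley--Quas, and their proof does begin exactly this way). But your final step contains a fatal quantitative error: you claim that $\overline{d}_\Lambda(\pi(x)|_\Lambda,W)<\varepsilon$ on a set of $\mu$-measure greater than $1-\varepsilon$, and then treat the factor $|\Lambda|$ in the conclusion as a harmless afterthought (``multiplying through by $|\Lambda|$''). That claim is false, and in fact provably impossible in the only situation where the proposition is used in this paper, namely $\mu(P_0)=\mu(P_1)=\frac12$. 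Indeed, if $\overline{d}_\Lambda(\pi(x)|_\Lambda,W)<\varepsilon$ then $x_g=W_g$ for at least $(1-\varepsilon)|\Lambda|$ of the $g\in\Lambda$, so by averaging
\[
(1-\varepsilon)\,\mu\{x:\overline{d}_\Lambda(\pi(x)|_\Lambda,W)<\varepsilon\}\leq\frac1{|\Lambda|}\sum_{g\in\Lambda}\mu\{x:x_g=W_g\}\leq\max_j\mu(P_j)=\frac12,
\]
so the set you want to have measure $>1-\varepsilon$ has measure at most $\frac1{2(1-\varepsilon)}$. (Concretely: for an irrational rotation with the two-interval partition --- a rank-one, hence AT, system --- the set of points whose name on a long window is $\varepsilon$-close to any fixed word has measure $O(\varepsilon)$, not $1-\varepsilon$.)

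The content of the proposition is precisely that the good set has measure at least $(1-\varepsilon)/|\Lambda|$, i.e.\ of order $1/|\Lambda|$; in the rank-one model it is essentially the base of a Rokhlin tower of height $|\Lambda|$, and the word $W$ is the name of the tower, read off from the coefficients (e.g.\ $W_{g_k}=0$ iff $\lambda_{1,k}\geq\lambda_{2,k}$), not the name of a ``well-chosen base point $x_0$'' as you propose. Your Fubini computation naturally produces estimates against the measure $f\,d\mu$, whose unit mass is spread over the $|\Lambda|$ translates $T_{g_k}$, and the factor $|\Lambda|$ in the statement is exactly the price of converting such an estimate into one against $\mu$; this conversion is the heart of the proof and is entirely absent from your sketch. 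As written, the argument asserts a false intermediate statement and therefore does not establish the proposition.
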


\section{Criterion to be non-AT}

In this section we assume that $G=\Z$ and we put $T_1=T$. We will
give a certain criterion for a system not to be AT. It is a
spectral extension of a criterion implicitly stated in
\cite{Dooley-Quas}.

\subsection{Strongly BH probability measures on the circle}
Denote by $\varepsilon_0$ the unique zero  in $(0,0.2)$ of the
polynomial $P(t)=2(1-t)(1-2t)^2-1-t$. Let $\mu$ be a probability
measure on the circle group. Motivated by Theorem~\ref{blumhanson}
below, we call $\mu$ a {\em strongly Blum-Hanson measure} (SBH
measure) if the following holds
\[
\limsup_{k \to
+\infty}\sup\left\{\left\|\frac1{\sqrt{k}}\sum_{j=1}^{k}(-1)^{\eta_j}z^{n_j}
\right\|^2_{L^2(\mu)}\!\!\!\!\!\!\!\!:n_1<\ldots<n_k,\eta_j\in\{0,1\},1\leq
j\leq k \right\} \leq 1+\varepsilon_0.
\]

\noindent{}Clearly Lebesgue measure is an SBH measure. However
more generally each absolutely continuous measure with
sufficiently ``flat'' density $g$, i.e.\ the density satisfying
$\sup_{z \in \T}g(z) < 1+\vep_0$ is also an SBH measure.

Recall that a measure $\mu$ on the circle is called a {\em
Rajchman measure} if $\displaystyle \lim_{n \to \infty}
\widehat{\mu}(n)=0$. Using some ideas of Lyons \cite{lyons} we
will prove that each SBH measure is a Rajchman measure. It is
well-known that Rajchman measure can be singular (see also
Section~\ref{gaussian} of the paper), however we have been unable
to decide whether there exist singular SBH measures. We state also in the following
the Blum-Hanson's theorem \cite{Blum-Hanson} in ergodic theory as a characterization of Rajchman
measures.

\begin{thm}{\label {blumhanson}}
 $\mu$ is Rajchman measure if and only if for any infinite increasing sequence
 $\{n_k\}_{k \in \N}$ of integers, we have
\begin{eqnarray}\label{hansonlimit}
\left\|\frac1{k}\sum_{j=1}^{k}z^{n_j}\right\|^2_{L^2(\mu)}\tend{k}{\infty}0
\end{eqnarray}
\end{thm}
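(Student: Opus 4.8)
The plan is to prove both implications by analyzing the quantity $\|\frac1k\sum_{j=1}^k z^{n_j}\|_{L^2(\mu)}^2$, which after expanding the square equals $\frac1{k^2}\sum_{i,j=1}^k\widehat\mu(n_i-n_j)$.

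For the easy direction, suppose $\mu$ is Rajchman, so $\widehat\mu(m)\to 0$ as $|m|\to\infty$. Fix an increasing sequence $\{n_k\}$. The double sum splits into the $k$ diagonal terms, each equal to $\widehat\mu(0)=1$, contributing $k/k^2=1/k\to 0$, plus the off-diagonal terms. For the off-diagonal part, given $\delta>0$ choose $M$ so that $|\widehat\mu(m)|<\delta$ for $|m|\ge M$; since the $n_j$ are strictly increasing integers, for each fixed $i$ there are at most $2M$ indices $j$ with $0<|n_i-n_j|<M$, so the number of "bad" pairs is at most $2Mk$, contributing at most $2Mk/k^2=2M/k\to 0$, while the remaining $\le k^2$ pairs each contribute at most $\delta/k^2$ in modulus, for a total of at most $\delta$. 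Letting $k\to\infty$ and then $\delta\to 0$ gives~\eqref{hansonlimit}. Alternatively one can invoke the classical van der Corput / Koopman–von Neumann style argument: Rajchman means $z^n\to 0$ weakly in $L^2(\mu)$ on the orthocomplement of constants, and the Blum–Hanson theorem in Hilbert space says weak convergence to $0$ along every subsequence forces norm convergence of Cesàro averages to $0$.

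For the converse, suppose $\mu$ is \emph{not} Rajchman. Then $\widehat\mu(n)\not\to 0$, so there is $\eta>0$ and a strictly increasing sequence $\{n_k\}$ with $|\widehat\mu(n_k)|\ge\eta$ for all $k$; by passing to a further subsequence and multiplying by a unimodular constant (which does not affect the $L^2(\mu)$-norm of the averages since $\|\frac1k\sum c\,z^{n_j}\|=\|\frac1k\sum z^{n_j}\|$ when $|c|=1$... more carefully, replace $z^{n_j}$ by $c_j z^{n_j}$ with $|c_j|=1$ chosen so that $c_i\overline{c_j}\widehat\mu(n_i-n_j)$ has the right phase — the standard trick is instead to pass to a subsequence along which $\widehat\mu(n_{k+1}-n_k)$ is close to a fixed value of modulus $\ge\eta$), arrange that the averages $\frac1k\sum_{j=1}^k z^{n_j}$ do not tend to $0$ in $L^2(\mu)$. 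The cleanest route: since $\widehat\mu(n_k)\to$ (along a subsequence) some limit $w$ with $|w|\ge\eta$, consider the sequence $\{0,n_{k_1},n_{k_1}+n_{k_2},\dots\}$ of partial sums to control the differences, or simply observe that non-Rajchman $\mu$ has a nontrivial weak-$*$ cluster point $\nu$ of the translates, forcing $\int z^{n}\,d\mu$ to oscillate, and build the subsequence greedily so that the Gram matrix $(\widehat\mu(n_i-n_j))_{i,j}$ stays close to a positive-definite matrix with a uniformly positive average of entries. This violates~\eqref{hansonlimit}.

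The main obstacle is the bookkeeping in the converse direction: from a single value $\widehat\mu(n)$ staying away from $0$ one must produce a whole \emph{sequence} of differences $n_i-n_j$ along which $\widehat\mu$ stays away from $0$ with coherent phases, so that the off-diagonal contribution to $\frac1{k^2}\sum_{i,j}\widehat\mu(n_i-n_j)$ does not cancel. The standard device is a diagonal/greedy extraction: having chosen $n_1<\cdots<n_m$, pick $n_{m+1}$ so large that all the new differences $n_{m+1}-n_i$ ($i\le m$) are among a prescribed set on which $\widehat\mu$ is large — which is possible because the set $\{n:|\widehat\mu(n)|\ge\eta\}$, being infinite (as $\mu$ is not Rajchman), is "syndetic enough" in the sense needed, or one uses the spectral representation $\widehat\mu(n)=\int_{\T}\overline{z}^{\,n}\,d\mu(z)$ together with compactness to find a point $z_0$ in the support around which mass accumulates and take $n_j$ so that $z_0^{n_j}$ are nearly equal. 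I would follow Lyons' argument as the excerpt suggests it has been adapted, since it gives exactly the quantitative control needed; the routine estimates on the Gram sum I would not expand here.
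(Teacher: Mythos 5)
The paper itself gives no proof of this theorem---it is stated with a citation to Blum and Hanson---so there is no in-paper argument to compare against; I can only assess your proposal on its own merits. Your forward direction (Rajchman implies the limit along every increasing sequence) is correct and complete: expanding the square gives $\frac1{k^2}\sum_{i,j}\widehat\mu(n_i-n_j)$, the diagonal contributes $1/k$, the pairs with $0<|n_i-n_j|<M$ number at most $2Mk$ because the $n_j$ are distinct integers, and the remaining pairs contribute at most $\sup_{|m|\ge M}|\widehat\mu(m)|$. That is the standard counting argument and it works.

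The converse, however, is a genuine gap, and the gap comes from chasing a difficulty that is not there. You frame the problem as having to produce a whole family of differences $n_i-n_j$ on which $\widehat\mu$ stays large with ``coherent phases'' so that the off-diagonal Gram sum does not cancel, and you then list several unexecuted devices (greedy extraction, partial-sum sequences, weak-$*$ cluster points) and declare the remaining estimates routine; as written this is not a proof. No control of the off-diagonal terms is needed in this direction. If $\mu$ is not Rajchman, choose $\eta>0$ and an increasing sequence $\{m_k\}$ with $|\widehat\mu(m_k)|\ge\eta$, pass to a subsequence so that $\int z^{m_k}\,d\mu\to c$ with $|c|\ge\eta$ (these values lie in the closed unit disc), and test the Ces\`aro average against the unit vector $1\in L^2(\mu)$:
\[
\Bigl\|\frac1K\sum_{k=1}^K z^{m_k}\Bigr\|_{L^2(\mu)}\;\ge\;\Bigl|\Bigl\langle \frac1K\sum_{k=1}^K z^{m_k},\,1\Bigr\rangle\Bigr|\;=\;\Bigl|\frac1K\sum_{k=1}^K\int_{\T} z^{m_k}\,d\mu\Bigr|\;\longrightarrow\;|c|\;\ge\;\eta>0,
\]
which violates the hypothesis. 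Cauchy--Schwarz against the constant function already does all the work; you should replace the entire second half of your proposal with this observation. (A small side remark: Rajchman is equivalent to $z^n\to0$ weakly in all of $L^2(\mu)$, not merely on the orthocomplement of the constants, since $\langle z^n,1\rangle=\widehat\mu(-n)\to0$ as well; with that said, your alternative route through the Hilbert-space Blum--Hanson theorem for the forward direction is also legitimate.)
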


Using some ideas of Lyons \cite{lyons} we  will now show that SBH
measures belong to the class of measures that annihilate all so
called $W^\ast$-sets (this class is known to be a proper subclass
of Rajchman measures, \cite{lyons}). We need to recall some basic
definitions. A sequence $\{t_n\} \subset \T$ is said to be {\em
uniformly distributed} if for all arcs $I \subset \T$
\[
\lim_{N \longrightarrow +\infty}\frac1{N}
\sum_{k=1}^{N}\chi_I(n_kx)=|I|.
\]
Weyl's criterion (e.g.\ \cite{Kuipers},pp.\ 1-3,7-8)) states that
a sequence $\{t_k \}_{k \in \N} \subset \T$  is uniformly
distributed if and only if for every non-zero integer $m$,
\[
\lim_{N \longrightarrow +\infty} \frac1{N}\sum_{k=1}^{N}e(mt_k)=0,
\]
where we use notation $e(x)$ for $e^{2i\pi x}$.

Now, following Kahane and Lyons we define the $W^*$-sets. A Borel
set $B \subset \T$ is called a $W^*$-{\em set} \cite{Kahane} (or a
non-normal set) if there exists an increasing sequence
${\{n_k\}}_{k \in \N}$ such that for every $x \in B$, $\{n_k
x\}_{k=1}^{\infty}$ is not uniformly distributed. The maximal
$W^*$-set corresponding to ${\{n_k\}}_{k \in \N}$ is the set
$W^*(\{n_k\}):= \{x\in\T:\: \{n_k x \}\;\mbox{is not uniformly
distributed}\}$ (it is Borel).

By Weyl's Criterion, in order to show that some probability
measure $\mu$ vanishes on all $W^*$-sets, we need to show that for
each $m \neq 0$ and each increasing sequence $\{n_k\}$
\begin{eqnarray}{\label {muWely}}
\frac1{K} \sum_{k=1}^{K} e(m n_k  t)=0\;\;\mbox{for
$\mu$-a.e.}\;t\in\T.
\end{eqnarray}
Actually, since $\{n_k \}$ is arbitrary, it is enough to establish
$(\ref{muWely})$ for $m=1$. We need the following form of the
strong law of large numbers for weakly correlated bounded random
variables due to Lyons \cite{lyons} and for the convenience of the reader we include the
proof.

\begin{lemm}{\label {SLLN}}
Let $\{X_n\}$ be a sequence of random variables on a probability
space $(\Omega,\mathcal{B},\P)$. Suppose that $|X_n| \leq  1$ a.s.
and there exists a positive constant $c \geq 1$ such that for any
$\{n_k\} \uparrow +\infty$ and $N$ large enough we have
\begin{eqnarray}{\label {l2majoration}}
\left\|\frac{1}{K}\sum_{k=1}^{K}X_{n_k }\right\|^2< \frac{c}{K}.
\end{eqnarray}
Then the strong law of large numbers holds:
\[
\lim_{N \longrightarrow +\infty}\frac1{K}\sum_{k=1}^{K}
X_{n_k}=0~~~~{\rm {a.~s.}}\]
\end{lemm}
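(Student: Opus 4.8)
The plan is to prove this strong law of large numbers via the classical Rademacher–Menshov type argument: first establish convergence along a sparse (dyadic-type) subsequence using the Borel–Cantelli lemma, then control the maximal fluctuation between consecutive members of that subsequence. Fix an arbitrary increasing sequence $\{n_k\}$; to lighten notation write $S_K=\sum_{k=1}^K X_{n_k}$, so that hypothesis \eqref{l2majoration} reads $\|S_K\|^2<cK$ for $K$ large. The key observation is that the hypothesis applies not only to $\{n_k\}$ itself but to \emph{every} increasing subsequence of it, in particular to tails and to sub-blocks $\{n_k:k\in(K_i,K_{i+1}]\}$; this is what lets us bound increments of $S_K$ and not merely $S_K$ itself.

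First I would pass to the subsequence $K_i=2^i$. By \eqref{l2majoration}, $\|S_{2^i}\|^2<c\,2^i$ for large $i$, so by Chebyshev $\P(|S_{2^i}|>2^i\vep)\le c\,2^i/(2^i\vep)^2=c/(2^i\vep^2)$, which is summable in $i$; Borel–Cantelli then gives $S_{2^i}/2^i\to 0$ a.s. Second, I would handle the gaps: for $2^i<K\le 2^{i+1}$ we have $|S_K|\le|S_{2^i}|+D_i$, where $D_i:=\max_{2^i<K\le 2^{i+1}}\bigl|\sum_{k=2^i+1}^{K}X_{n_k}\bigr|$, so it suffices to show $D_i/2^i\to 0$ a.s. To estimate $D_i$ I would apply the standard dyadic (Rademacher–Menshov) bound to the block $X_{n_{2^i+1}},\dots,X_{n_{2^{i+1}}}$ of length $2^i$: splitting the index range $(2^i,2^{i+1}]$ along a binary tree of depth $i$, every partial sum $\sum_{k=2^i+1}^K X_{n_k}$ is a sum of at most $i+1$ "dyadic block sums" over intervals appearing in the tree, so
\[
\E D_i^2\le (i+1)\sum_{\text{dyadic blocks }J\subset(2^i,2^{i+1}]}\Bigl\|\sum_{k\in J}X_{n_k}\Bigr\|^2 .
\]
Each such block $J$ is itself an interval of consecutive indices, hence indexes an increasing subsequence of $\{n_k\}$, so by \eqref{l2majoration} $\|\sum_{k\in J}X_{n_k}\|^2<c\,|J|$ (for $i$ large enough that $|J|$ is past the threshold; the finitely many small blocks contribute a bounded amount since $|X_n|\le1$). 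Summing $|J|$ over all dyadic blocks at a fixed level gives $2^i$, and there are $i+1$ levels, so $\E D_i^2\lesssim (i+1)^2\,c\,2^i$. Then $\P(D_i>2^i\vep)\le \E D_i^2/(2^i\vep)^2\lesssim (i+1)^2 c/(2^i\vep^2)$, which is again summable in $i$; Borel–Cantelli yields $D_i/2^i\to 0$ a.s. Combining the two parts, $S_K/K\to 0$ a.s.\ along the full sequence.

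The main obstacle — really the only subtle point — is making sure the hypothesis \eqref{l2majoration} is invoked legitimately for the block sums: it is stated for averages $\frac1K\sum_{k=1}^K X_{n_k}$ over an \emph{initial} segment of an arbitrary increasing sequence, whereas I need it for sums over interior blocks $\{n_k:k\in J\}$. This is fine because $\{n_k:k\in J\}$, listed in increasing order, is itself an admissible increasing sequence, and a block sum over it is an initial-segment sum for that new sequence; one just has to be careful that "$N$ large enough" in the hypothesis is uniform enough that it applies once $|J|$ exceeds a fixed $N_0$ independent of $i$ — which it is, and the finitely many blocks with $|J|\le N_0$ only add an $O(N_0)$ term to $\E D_i^2$ that is absorbed into the constant. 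A secondary bookkeeping point is the Rademacher–Menshov decomposition of an arbitrary partial sum into $O(\log(\text{length}))$ dyadic blocks; this is standard and introduces only the harmless $(i+1)^2$ factor, which does not spoil summability after dividing by $2^{2i}$.
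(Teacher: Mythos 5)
Your argument is correct, but it takes a genuinely different (and heavier) route than the paper's. The paper follows Lyons and uses the polynomially spaced subsequence $K^2$ instead of the dyadic one: along squares, hypothesis (\ref{l2majoration}) gives $\bigl\|\frac{1}{K^2}\sum_{k=1}^{K^2}X_{n_k}\bigr\|^2<c/K^2$, so the series $\sum_K\bigl|\frac{1}{K^2}\sum_{k=1}^{K^2}X_{n_k}\bigr|^2$ has finite expectation and hence converges a.s., which already yields convergence along squares (no Borel--Cantelli needed); and for $m^2\le K<(m+1)^2$ the gap $K-m^2\le 2m+1$ is $o(K)$, so the crude bound $|X_n|\le1$ alone controls the interpolation error by $(2m+1)/m^2\to0$. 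This choice of subsequence is precisely what lets the paper dispense with your entire second step: no maximal inequality, no Rademacher--Menshov chaining, and, importantly, no need to apply (\ref{l2majoration}) to anything other than initial segments of the single given sequence $\{n_k\}$. Your route must instead invoke the hypothesis for interior blocks $\{n_k:k\in J\}$, and that is your one point of exposure: as literally quantified (``for any $\{n_k\}\uparrow+\infty$ and $N$ large enough''), the threshold may depend on the sequence, so the uniformity over all dyadic blocks that you assert is not automatic from the lemma's wording, though it does hold in the intended application where the SBH condition is uniform over all finite index sets. In short, your proof works (under the uniform reading) and even yields a maximal inequality as a byproduct, while the paper's proof is more elementary and needs strictly less from the hypothesis.
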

\begin{proof} Assume that $n_k\uparrow +\infty$. It follows from~(\ref{l2majoration})  that
\[
\int \sum_{K \geq 1}\left |
\frac1{K^2}\sum_{k=1}^{K^2}X_{n_k}\right|^2 d\mu < c\sum_{K \geq
1}\frac1{K^2}=\frac{c \pi^2}{6}.
\]
Therefore $\P$-a.s.
\[
\sum_{K \geq 1}\left |\frac1{K^2}\sum_{k=1}^{K^2}X_{n_k}\right
|^2<\infty \] and hence ($\P$-a.s.)
\[
\lim_{K \longrightarrow +\infty}\left
|\frac1{K^2}\sum_{k=1}^{K^2}X_{n_k}\right |=0.
\]
\noindent Now if $m^2\leq K < {(m+1)}^2$, then
\[
\left |\frac{1}{K}\sum_{k=m^2+1}^{K}X_{n_k} \right|\leq
\frac1K(K-m^2)\leq\frac1K(2m+1)\leq\frac1{m^2}(2m+1)\to0
\]
when $m\to\infty$. But
\[
\left |\frac1{K^2}\sum_{k=1}^{K}X_{n_k}\right | \leq \left
|\frac1{m^2}\sum_{k=1}^{m^2}X_{n_k}\right|
+\left|\frac{1}{K}\sum_{k=m^2+1}^{K}X_{n_k} \right|\] and hence
\[
\lim_{K \longrightarrow +\infty} \frac1{K}\sum_{k=1}^{K}X_{n_k} =0
{\rm {~~a.s.}}
\]
which completes the proof.
\end{proof}

\begin{prop}{\label {wsets}}
If $\mu$ is an SBH measure then $\mu(E)=0$ for each $W^*$-set
$E\subset\T$.
\end{prop}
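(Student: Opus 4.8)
The plan is to reduce the statement, exactly along the lines sketched just before Lemma~\ref{SLLN}, to the following assertion: for every increasing sequence $\{n_k\}\subset\N$,
\[
\frac1K\sum_{k=1}^K e(n_k t)\To 0\qquad\text{for }\mu\text{-a.e. }t\in\T .
\]
Indeed, applying this with $\{n_k\}$ replaced by $\{m\,n_k\}$ (and, for $m<0$, taking complex conjugates) shows that for each fixed $m\neq0$ the set of $t$ for which $\frac1K\sum_{k=1}^K e(m n_k t)$ fails to tend to $0$ is $\mu$-null; a countable union over $m\in\Z\setminus\{0\}$ together with Weyl's criterion then gives $\mu\big(W^*(\{n_k\})\big)=0$, and since every $W^*$-set $E$ is contained in some $W^*(\{n_k\})$ we conclude $\mu(E)=0$.

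To prove the displayed assertion I would invoke Lemma~\ref{SLLN} on the probability space $(\T,\mathcal B,\mu)$ with the bounded random variables $X_n(z)=z^n$ (with $z=e(t)$); since that lemma is stated for scalar variables I would apply it to the real and imaginary parts of $X_n$ separately, each of modulus $\le1$, using that the $L^2(\mu)$-norm of either part of $\frac1K\sum_{k=1}^K z^{n_k}$ is dominated by $\big\|\frac1K\sum_{k=1}^K z^{n_k}\big\|_{L^2(\mu)}$. Thus it suffices to verify hypothesis~\eqref{l2majoration} in the form
\[
\left\|\frac1K\sum_{k=1}^K z^{n_k}\right\|^2_{L^2(\mu)}<\frac cK
\]
for some constant $c\ge1$, uniformly over all increasing $\{n_k\}$ and all sufficiently large $K$.

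This is precisely where the SBH property enters. Specializing its definition to $\eta_1=\dots=\eta_k=0$ gives
\[
\limsup_{k\to\infty}\ \sup_{n_1<\dots<n_k}\left\|\frac1{\sqrt k}\sum_{j=1}^k z^{n_j}\right\|^2_{L^2(\mu)}\ \le\ 1+\varepsilon_0\ <\ 2 ,
\]
so there is $k_0$ with $\big\|\frac1{\sqrt k}\sum_{j=1}^k z^{n_j}\big\|^2_{L^2(\mu)}\le2$ for all $k\ge k_0$ and every choice $n_1<\dots<n_k$; dividing by $k$ this is~\eqref{l2majoration} with $c=2$ for $K\ge k_0$. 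Lemma~\ref{SLLN} then yields $\frac1K\sum_{k=1}^K z^{n_k}\to0$ $\mu$-a.e., which is the assertion, and the proof is complete.

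I do not expect a genuine obstacle here: the substance is carried entirely by Lemma~\ref{SLLN} and by the definition of an SBH measure. The only points deserving care are that one really does extract a \emph{uniform} $O(1/K)$ bound (uniform in the increasing sequence) from the $\limsup$ — which is exactly why the definition of an SBH measure is phrased with that $\limsup$ — and that Lemma~\ref{SLLN} is being used with ``$K$'' in place of the misprinted ``$N$'', its proof being unaffected by the estimate holding only for large $K$ since the finitely many excluded terms are bounded by $1$. It is worth noting that the precise value of $\varepsilon_0$ is irrelevant for this proposition: any finite bound in place of $1+\varepsilon_0$ would do, so the role of $\varepsilon_0$ (and of the signs $(-1)^{\eta_j}$) lies in the later applications of the non-AT criterion rather than here.
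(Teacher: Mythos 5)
Your proof is correct and follows essentially the same route as the paper's: specialize the SBH definition to all $\eta_j=0$ to get the uniform $O(1/K)$ bound required in hypothesis~(\ref{l2majoration}), apply Lemma~\ref{SLLN} to $X_{n_k}=e(n_k\cdot)$ on $(\T,\mathcal B,\mu)$, and conclude via Weyl's criterion together with the reduction to $m=1$ already noted before the lemma. Your write-up merely makes explicit the uniformity in the sequence and the real/imaginary-part bookkeeping that the paper leaves implicit.
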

\begin{proof} Let $\mu$ be an SBH measure. Then, for any
increasing sequence $\{n_k \}$ we have
\[
\left\|\frac1{\sqrt{k}}\sum_{j=1}^{k}z^{n_j}\right\|^2_{L^2(\mu)}<1+\varepsilon_0
\]
for $k$ large enough. It follows from Lemma~\ref{SLLN} that the
strong law of large numbers holds for the sequence $\{X_{n_k}\}$
with  $X_{n_k}=e(n_k\cdot)$, $k\geq1$, and the result follows.
\end{proof}

We now pass to our criterion for a system not to be an AT-system
(we refer the reader to \cite{Cornfeld} to basic facts about
spectral theory of dynamical systems).

\begin{prop}\label{noat}
Assume that a dynamical system $(X,\mathcal{B},\mu,T)$ is ergodic
and  that there exists a (measurable) partition
$\mathcal{P}=\{P_0,P_1\}$ with the following properties:
\begin{enumerate}
    \item[i)] There exists $S$ in the centralizer $C(T)$ of $T$ such that
    $SP_0=P_1$; in particular  $\mu(P_0)=\mu(P_1)=\frac12$.
    \item[ii)] The spectral measure $\sigma_{\chi_{{}_{P_0}}-\chi_{{}_{P_1}}}$ of $\chi_{P_0}-\chi_{P_1}$
    is an SBH measure.
\end{enumerate}
Then the system is not AT.
\end{prop}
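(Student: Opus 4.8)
The plan is to argue by contradiction using Proposition~\ref{fanny}, which is the combinatorial necessary condition for AT. Suppose the system were AT. Then for every $\varepsilon>0$ there would exist a finite set $\Lambda\subset\Z$ of arbitrarily large cardinality and a funny word $W=(W_n)_{n\in\Lambda}$ such that the set $A_\Lambda=\{x:\overline d(\pi(x)|_\Lambda,W)<\varepsilon\}$ satisfies $|\Lambda|\,\mu(A_\Lambda)>1-\varepsilon$. Here $\pi(x)=(x_n)$ is the $\mathcal P$-name with respect to the partition $\mathcal P=\{P_0,P_1\}$. The idea is that such a word $W$ concentrates almost all of the (normalized) mass of the name-function on a small Hamming ball; combined with condition (i), which says $S$ interchanges $P_0$ and $P_1$ for some $S\in C(T)$, this forces a strong correlation among the translates $z^n$, $n\in\Lambda$, in $L^2(\sigma)$ where $\sigma=\sigma_{\chi_{P_0}-\chi_{P_1}}$ is the spectral measure, and this will contradict the SBH bound $1+\varepsilon_0$ for $\varepsilon$ small.

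First I would translate the Hamming condition into an $L^2$-statement. Write $\varphi=\chi_{P_0}-\chi_{P_1}$, so $|\varphi|=1$ and $U_T^n\varphi=\varphi\circ T^n$ takes values $\pm1$; note $(\varphi\circ T^n)(x)=(-1)^{x_n}$. Setting $\eta_n=W_n$, on the set $A_\Lambda$ the number of indices $n\in\Lambda$ with $(-1)^{x_n}\ne(-1)^{\eta_n}$ is less than $\varepsilon|\Lambda|$, i.e.\ $\frac1{|\Lambda|}\sum_{n\in\Lambda}(-1)^{\eta_n}\varphi\circ T^n(x)>1-2\varepsilon$ on $A_\Lambda$. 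Since $\mu(A_\Lambda)$ itself is close to $1$ once $|\Lambda|$ is large (it is $>(1-\varepsilon)/|\Lambda|$, which is weak — so I actually want to use $S$ to boost it). This is where condition (i) enters: applying $S$, which commutes with $T$ and swaps $P_0,P_1$, we get $\varphi\circ S=-\varphi$, hence $\varphi\circ T^n\circ S=-\varphi\circ T^n$, so $S(A_\Lambda)$ is (up to measure zero) the set where $\frac1{|\Lambda|}\sum_{n\in\Lambda}(-1)^{\eta_n}\varphi\circ T^n<-(1-2\varepsilon)$. Thus $A_\Lambda$ and $S(A_\Lambda)$ are disjoint and have equal measure, so in fact $\mu(A_\Lambda)\le\frac12$; but we need a lower bound. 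The correct route, following Dooley--Quas, is to observe that if $\mu(A_\Lambda)$ is small then the word $W$ fails to capture the mass, contradicting Proposition~\ref{fanny} directly for suitably small $\varepsilon$ — so in fact $\mu(A_\Lambda)$ must be bounded \emph{below} (say $>\varepsilon$, crudely, or one iterates the AT inequality to get a genuinely large set). I would set this up carefully so that I end with a set $B$ with $\mu(B)\ge\delta>0$ (with $\delta$ not going to zero as $|\Lambda|\to\infty$) on which $|\frac1{|\Lambda|}\sum_{n\in\Lambda}(-1)^{\eta_n}\varphi\circ T^n|>1-2\varepsilon$.

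Next I would compute the $L^2$-norm: since $\|\frac1{|\Lambda|}\sum_{n\in\Lambda}(-1)^{\eta_n}\varphi\circ T^n\|_\infty\le1$ and this average exceeds $1-2\varepsilon$ in absolute value on a set of measure $\ge\delta$ while being bounded by $1$ everywhere, we get
\[
\left\|\frac1{|\Lambda|}\sum_{n\in\Lambda}(-1)^{\eta_n}\varphi\circ T^n\right\|_{L^2(\mu)}^2\ \ge\ \delta(1-2\varepsilon)^2.
\]
Translating to the spectral side via $\|\,\cdot\,\varphi\circ T^n\mapsto z^n\,\|$ and the spectral measure $\sigma=\sigma_\varphi$, and writing $k=|\Lambda|$, $\{n_j\}$ an enumeration of $\Lambda$, $\eta_j=W_{n_j}$, multiply by $k$:
\[
\left\|\frac1{\sqrt k}\sum_{j=1}^k(-1)^{\eta_j}z^{n_j}\right\|_{L^2(\sigma)}^2\ \ge\ k\,\delta(1-2\varepsilon)^2.
\]
Since $|\Lambda|=k$ can be taken arbitrarily large while $\delta,\varepsilon$ stay fixed, the right-hand side is unbounded, which flatly contradicts the SBH hypothesis (ii) that this supremum is $\le1+\varepsilon_0$ for all large $k$. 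The main obstacle — and the step requiring genuine care rather than routine estimation — is extracting from Proposition~\ref{fanny} a lower bound on the measure $\delta$ of the ``good'' set that does not degenerate as $|\Lambda|\to\infty$; this is exactly where one leans on the freedom (noted after equation~(\ref{cw1})) to apply the AT inequality to just two functions and iterate, together with condition (i) to symmetrize, so that the partition-name concentration is witnessed on a set of substantial measure. Once that is in hand, the passage to the spectral measure of $\varphi=\chi_{P_0}-\chi_{P_1}$ and the clash with the SBH definition is immediate, and the choice $\varepsilon<\tfrac12$ (or any fixed small $\varepsilon$) suffices, with $\varepsilon_0$ playing no role beyond being the fixed threshold in the definition of SBH.
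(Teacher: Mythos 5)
Your overall setup (contradiction with Proposition~\ref{fanny}, passing to the $\pm1$-valued averages $\frac1k\sum_j(-1)^{\eta_j}\varphi\circ T^{n_j}$ with $\varphi=\chi_{P_0}-\chi_{P_1}$, and then to $L^2(\sigma)$ via the spectral theorem) is the right frame, but you run the inequality in the wrong direction, and the step you yourself flag as the ``main obstacle'' is not merely delicate --- it is unobtainable. You want a set of measure $\delta>0$, with $\delta$ independent of $k=|\Lambda|$, on which the average exceeds $1-2\varepsilon$ in modulus, so as to force $\bigl\|\frac1{\sqrt k}\sum_j(-1)^{\eta_j}z^{n_j}\bigr\|^2_{L^2(\sigma)}\ge k\,\delta(1-2\varepsilon)^2\to\infty$. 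But under hypothesis (ii) the SBH bound combined with Tchebychev already shows that the set where $\bigl|\frac1k\sum_j(-1)^{\eta_j}\varphi\circ T^{n_j}\bigr|>1-2\varepsilon$ has measure at most $\frac{1+\varepsilon_0}{k(1-2\varepsilon)^2}$, so any such $\delta$ is necessarily $O(1/k)$. No iteration of the AT inequality and no symmetrization by $S$ can produce a $k$-independent lower bound; Proposition~\ref{fanny} only ever yields $\mu(A_\Lambda)>(1-\varepsilon)/k$, and that is the bound one must work against as it stands. The contradiction has to come from a constant, not from unboundedness.

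The paper's proof goes the other way: set $\Theta^W=\frac1k\sum_j(-1)^{W_{n_j}}\varphi\circ T^{n_j}$, observe that $\{x:\overline{d}_\Lambda(W,\pi(x)|_\Lambda)<\varepsilon\}=\{\Theta^W>1-2\varepsilon\}$, use $\varphi\circ S=-\varphi$ to see that the distribution of $\Theta^W$ is symmetric, so that this set has measure $\frac12\,\mu\{|\Theta^W|>1-2\varepsilon\}\le\frac1{2(1-2\varepsilon)^2}\|\Theta^W\|_2^2$, and then the spectral theorem plus the SBH hypothesis give $\|\Theta^W\|_2^2\le\frac{1+\varepsilon_0}{k}$ for $k$ large. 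Hence $k\,\mu(A_\Lambda)\le\frac{1+\varepsilon_0}{2(1-2\varepsilon)^2}$, which contradicts the lower bound $1-\varepsilon$ of Proposition~\ref{fanny} precisely because $\varepsilon_0$ is the root of $2(1-t)(1-2t)^2-1-t$. This shows two of your side remarks are also off: the role of $S$ is not to ``boost'' the measure of $A_\Lambda$ but to gain the factor $\frac12$ through the symmetry of the distribution of $\Theta^W$ (without it the Tchebychev constant $\frac{1+\varepsilon_0}{(1-2\varepsilon)^2}$ exceeds $1$ and no contradiction results), and $\varepsilon_0$ is not an inessential threshold --- its numerical value is exactly what makes $\frac{1+\varepsilon_0}{2(1-2\varepsilon)^2}<1-\varepsilon$ for suitable $\varepsilon$.
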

\begin{proof}
Let us take  $W$ a funny word based on a subset $\Lambda:
n_1<n_2<\cdots<n_k$. For $x\in X$ put
\[
\Theta^W(x)=\frac1{k}\sum_{j=1}^{k}A^W_j(x),
\]
where $A_j^W$ is defined as
\[
A_j^W(x)=\left\{
\begin{array}{rl}
    1 & \hbox{if~~ $W_{n_j}=x_{n_j}$} \\
    -1 & \hbox{if~~ not}
\end{array}%
\right.
\]
(recall that $(x_n)=\pi(x)$ is the $\mathcal P$-name of $x$). Then
the distribution $\Theta_\ast$ of $\Theta$ is symmetric. Indeed,
we have $\pi(x)=-\pi(Sx)$ and therefore
$$\Theta(\pi(x))=-\Theta(\pi(Sx))$$
and since $S$ is measure-preserving the symmetry of  $\Theta_\ast$
follows.

Notice that \begin{equation}\label{ee1}
A_j^W(x)=(-1)^{W_{n_j}}(\chi_{P_0}-\chi_{P_1})(T^{n_j}x)\end{equation}
and that \begin{equation}\label{ee2}
\Theta^W(x)=1-2\overline{d}_\Lambda(W,\pi(x)|_\Lambda).\end{equation}
In view of (\ref{ee2}), the symmetry of $\Theta_\ast$ and  the
Tchebychev inequality we obtain that
\begin{eqnarray}\label{pp}
\mu\{x\in X:~~\overline{d}_\Lambda(W,\pi(x)|_\Lambda)
<\varepsilon\}&=&\mu \{x\in
X:~~\Theta^W(x)> 1-2\varepsilon\} \nonumber \\&=&\frac12\mu \{x\in X
:~~|\Theta^W(x)|
>1-2\varepsilon\}\\ &\leq&
\frac1{2(1-2\varepsilon)^2}||\Theta^W||_2^2.\nonumber
\end{eqnarray}
But, in view of (\ref{ee1}) and the Spectral Theorem
$$
\|\Theta^W\|_2^2=\int_X\left|\frac1k\sum_{j=1}^kA_j^W\right|^2\,d\mu$$
$$
=\frac1{k^2}\sum_{i,j=1}^k\int_X(-1)^{W_{n_i}}\left(\chi_{P_0}-\chi_{P_1}\right)(T^{n_i}x)\cdot
(-1)^{W_{n_j}}\left(\chi_{P_0}-\chi_{P_1}\right)(T^{n_j}x)\,d\mu(x)$$
$$
=\frac1{k^2}
\sum_{i,j=1}^k(-1)^{W_{n_i}+W_{n_j}}\hat{\sigma}_{\chi_{{}_{P_0}}-\chi_{{}_{P_1}}}(n_i-n_j)
 =\frac1{k}\int
 \left|\frac1{\sqrt{k}}\sum_{i=1}^{k}(-1)^{W_{n_i}}z^{n_i}\right|^2\,
d{\sigma}_{\chi_{{}_{P_0}}-\chi_{{}_{P_1}}}(z).
$$
It follows that for $k$ large enough we have
\begin{equation}\label{ee3}
\|\Theta\|_2^2 < \frac1{k}(1+\varepsilon_0).
\end{equation}
Combining (\ref{pp}) and (\ref{ee3}) we obtain that
\[
k~\mu\{x\in
X:~~\overline{d}_\Lambda(W,\pi(x)|_\Lambda)<\varepsilon\} \leq
\frac{1+\varepsilon}{2{(1-2\varepsilon)}^2}
\]
and since $W$ was arbitrary this contradicts
Proposition~\ref{fanny}.
\end{proof}

\begin{Cor}\label{noatAS}
Under the assumptions of Proposition~\ref{noat} assume that the
Fourier transform of $\sigma_{\chi_{P_0}-\chi_{P_1}}$ is in $l^1$.
Then there exists $m_0\geq 1$ such that $T^m$ is not AT for all
$m\geq m_0$.
\end{Cor}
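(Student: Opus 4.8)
The plan is to upgrade Proposition~\ref{noat} from the statement ``the system is not AT'' to ``$T^m$ is not AT for $m$ large'' by re-running the proof of Proposition~\ref{noat} with $T$ replaced by $T^m$, and checking that the extra hypothesis --- $\widehat{\sigma}_{\chi_{P_0}-\chi_{P_1}}\in\ell^1$ --- lets us verify hypothesis ii) (the SBH property) for the relevant spectral measure uniformly in $m\geq m_0$. First I would record the effect of passing to $T^m$ on the data of the proposition. Hypothesis i) is inherited automatically: the same $S\in C(T)\subseteq C(T^m)$ still satisfies $SP_0=P_1$, so $\mu(P_0)=\mu(P_1)=\tfrac12$ and the symmetry argument for the distribution $\Theta_\ast$ goes through verbatim. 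So the only thing to check is that the spectral measure of $\chi_{P_0}-\chi_{P_1}$ \emph{with respect to} $T^m$, call it $\sigma^{(m)}$, is an SBH measure, and indeed that the $\limsup$ in the definition of SBH is $\leq 1+\varepsilon_0$ with a bound that does not deteriorate as $m\to\infty$.

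The key computation is the relation between $\sigma^{(m)}$ and $\sigma:=\sigma_{\chi_{P_0}-\chi_{P_1}}$. Since $\widehat{\sigma^{(m)}}(n)=\widehat{\sigma}(mn)$ for all $n\in\Z$, the measure $\sigma^{(m)}$ is the push-forward of $\sigma$ under $z\mapsto z^m$. Now the SBH defect is controlled by off-diagonal Fourier coefficients: for any $n_1<\dots<n_k$ and any signs $\eta_j$,
\[
\left\|\frac1{\sqrt k}\sum_{j=1}^k(-1)^{\eta_j}z^{n_j}\right\|_{L^2(\sigma^{(m)})}^2
=\frac1k\sum_{i,j=1}^k(-1)^{\eta_i+\eta_j}\widehat{\sigma}(m(n_i-n_j))
\leq 1+\frac1k\sum_{i\neq j}\bigl|\widehat{\sigma}(m(n_i-n_j))\bigr|,
\]
using $\widehat{\sigma}(0)=\|\chi_{P_0}-\chi_{P_1}\|_2^2=1$. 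The $\ell^1$ hypothesis gives $\sum_{n\neq 0}|\widehat{\sigma}(n)|=:M<\infty$. Because the $n_i$ are distinct integers, for each fixed $i$ the differences $n_i-n_j$ ($j\neq i$) are distinct nonzero integers, hence the multiples $m(n_i-n_j)$ are distinct nonzero integers lying in $m\Z\setminus\{0\}$; summing over $j\neq i$ and then over $i$ and using that each value of $m(n_i-n_j)$ is attained by a bounded number of index pairs, the double sum is at most $C\sum_{\ell\in m\Z,\ell\neq 0}|\widehat{\sigma}(\ell)|$, which is the tail of an $\ell^1$ sequence along the sparse set $m\Z$ and therefore tends to $0$ as $m\to\infty$, uniformly in $k$ and in the choice of the $n_j$. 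Pick $m_0$ so that this bound is below $\varepsilon_0$ for all $m\geq m_0$; then for every $m\geq m_0$ the measure $\sigma^{(m)}$ is SBH.

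Having established that $T^m$ ($m\geq m_0$) is ergodic --- here I would note that $\sigma$ being Rajchman, hence continuous, forces $\sigma^{(m)}$ to be continuous too, so $1$ is not an eigenvalue of $U_{T^m}$ restricted to the cyclic space, and in fact one argues ergodicity of $T^m$ directly from the SBH/Rajchman property as in the proof of Proposition~\ref{wsets} --- and satisfies both i) and ii) with respect to the partition $\mathcal P$, Proposition~\ref{noat} applied to $(X,\mathcal B,\mu,T^m)$ yields that $T^m$ is not AT, which is the claim. The main obstacle is the combinatorial bookkeeping in the last displayed estimate: one must be careful that the passage from $T$ to $T^m$ genuinely \emph{sparsifies} the relevant arguments of $\widehat\sigma$ (they all become multiples of $m$) rather than merely rescaling them, so that the $\ell^1$ tail along $m\Z$ can be invoked; a short lemma isolating the bound $\sum_{i\neq j}|\widehat\sigma(m(n_i-n_j))|\leq 2\sum_{\ell\in m\Z,\,\ell\neq0}|\widehat\sigma(\ell)|$ (uniformly in $k$ and in the increasing sequence) makes this transparent, and everything else is a direct citation of the preceding results.
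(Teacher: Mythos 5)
Your argument is correct and is essentially the paper's own proof: both rest on the identity $\widehat{\sigma^{(m)}}(n)=\widehat{\sigma}(mn)$ together with the observation that the $\ell^1$ hypothesis forces $\sum_{n\neq0}|\widehat{\sigma}(mn)|\leq\sum_{|\ell|\geq m}|\widehat{\sigma}(\ell)|\to0$ as $m\to\infty$; the paper phrases this as the density $d_m(z)=\sum_n\widehat{\sigma}(mn)z^n$ of $\sigma^{(m)}$ being uniformly close to $1$ (hence sufficiently flat, hence SBH by the remark following the definition of SBH measures), whereas you verify the SBH inequality directly by expanding the quadratic form --- the same estimate unrolled.

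One quantitative slip: the ``short lemma'' you propose at the end, namely $\sum_{i\neq j}|\widehat{\sigma}(m(n_i-n_j))|\leq 2\sum_{\ell\in m\Z,\,\ell\neq0}|\widehat{\sigma}(\ell)|$ uniformly in $k$, is false as stated --- taking $n_j=j$, the single term $|\widehat{\sigma}(m)|$ already occurs $k-1$ times, so the left-hand side grows linearly in $k$. What is true, and what your displayed chain of inequalities actually uses, is the normalized bound $\frac1k\sum_{i\neq j}|\widehat{\sigma}(m(n_i-n_j))|\leq \sum_{\ell\in m\Z,\,\ell\neq0}|\widehat{\sigma}(\ell)|$: for each fixed $i$ the arguments $m(n_i-n_j)$, $j\neq i$, are distinct nonzero multiples of $m$, so the inner sum is at most the $\ell^1$ tail along $m\Z\setminus\{0\}$, and averaging over $i$ preserves that bound. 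With this correction the choice of $m_0$ goes through. Finally, the aside on ergodicity of $T^m$ is unnecessary (and your sketch does not actually establish it): since AT implies ergodicity, a non-ergodic $T^m$ is automatically not AT, so nothing needs to be checked there.
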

\begin{proof}
In this case $\sigma=\sigma_{\chi_{P_0}-\chi_{P_1}}$ is absolutely
continuous with the (continuous) density $d$ given by
$d(z)=\sum_{n=-\infty}^\infty \hat{\sigma}(n)z^n$ and by the same
token if instead of $T$ we consider $T^m$ the spectral measure
$\sigma_m$ of $\chi_{P_0}-\chi_{P_1}$ is also absolutely
continuous with the density $d_m$ given by
$$d_m(z)=\sum_{n=-\infty}^\infty\hat{\sigma}_m(n)z^n=\sum_{n=-\infty}^\infty\hat{\sigma}(mn)z^{n},$$
so for $m$ large enough $\sigma_m$ will be an SBH measure.
\end{proof}

\begin{rem} Notice that the $L^2$-norm of $\chi_{P_0}-\chi_{P_1}$ is~$1$. Moreover
the spectral measure of $\chi_{P_0}-\chi_{P_1}$ is Lebesgue if and
only if the sequence of partitions $\{T^n{\mathcal P}\}$ is
pairwise independent. Indeed, for $n\geq1$,
$\hat{\sigma}_{\chi_{P_0}-\chi_{P_1}}(n)=0$ implies that $$
\mu(T^{-n}P_0\cap P_0)+\mu(T^{-n}P_1\cap P_1)=\mu(T^{-n}P_0\cap
P_1)+\mu(T^{-n}P_1\cap P_0).$$ Thus $$\mu(T^{-n}P_0\cap
P_0)+\mu(T^{-n}P_1\cap P_1)=\frac12$$ and since $\mu(T^{-n}P_0\cap
P_1)+\mu(T^{-n}P_0\cap P_0)=\frac12$, we have $$ \mu(T^{-n}P_1\cap
P_1)=\mu(T^{-n}P_0\cap P_1)=\frac14$$ and therefore we obtain
pairwise independence (see \cite{Courbage-Hamdan},
\cite{Flaminio}).

However, in general it is unclear that even for transformations
with Lebesgue spectrum or the more with a Lebesgue component in
the spectrum) we can always find a partition $\cp$ satisfying the
assumptions of Proposition~\ref{noat} where in addition the
spectral measure of $\sigma_{\chi_{P_0}-\chi_{P_1}}$ is
exactly Lebesgue. A certain flexibility of Proposition~\ref{noat}
consists in the fact that for some natural partitions we need only
to show that the corresponding spectral measure is absolutely
continuous with the density sufficiently flat.

In the rest of the paper we will show how this can be applied in
practice.
\end{rem}

\begin{rem}
We can also define the notion of SBH measure for groups $G$ more
general than $\Z$. Based on Propositon~\ref{fanny} we can then
prove a relevant version of Proposition~\ref{noat} as a criterion
for a $G$-system to be non AT. It would be interesting to know
which results of Section~\ref{applications} have their natural
generalizations.
\end{rem}

\section{Applications}\label{applications}

Except for the Gaussian case considered in Section~\ref{gaussian}
all examples below of non AT-automorphisms will be given as group
extensions. Recall briefly some basic facts. Let
$T:(X,\mathcal{B},\mu) \longrightarrow (X,\mathcal{B},\mu) $ be an
ergodic automorphism. Let $G$ be a compact metric Abelian group
with Haar measure $m$. Denote by
 $\widehat{G}$ the character group of $G$. By a {\it{cocycle}} we mean  a measurable
function $ \varphi :X \to G$; in fact such a $\varphi$ generates a
cocycle $\varphi(n,\cdot)=\varphi^{(n)}(\cdot)$ by $$
\varphi^{(n)}(x)=\left\{ \begin{array}{cll}
\varphi(x)+\ldots+\varphi(T^{n-1}x) & \mbox{if} & n>0,\\
0 & \mbox{if} & n=0,\\
-(\varphi(T^nx)+\ldots+\varphi(T^{-1}x)) & \mbox{if} &
n<0.\end{array}\right.$$ Then we define a $\mu \otimes
m$-preserving automorphism
$$
T_{\varphi}:X \times G \to X \times G,\;
T_{\varphi}(x,g)=(Tx,\varphi(x)+g),~~~x \in X, g \in G
$$
called a $G$-{\em extension of} $T$. Notice that
$T_\varphi^n(x,g)=(T^nx,\varphi^{(n)}(x)+g)$. The space $L^2(X
\times G , \mu \otimes m )$ can be decomposed as
\begin{equation}\label{dec1}
L^2(X \times G , \mu \otimes m )=\bigoplus_{\chi \in
\widehat{G}}L_{\chi}, \end{equation} where each subspace $L_{\chi}
= \{f \bigotimes \chi : f \in L^2(X, \mu)\}$ is
$U_{T_{\varphi}}$-invariant, and the restriction of
$U_{T_{\varphi}}$ to $L_{\chi} $ is unitarily equivalent to
$V_{\varphi,T,\chi}~~:~~ L^2(X, \mu)\longrightarrow L^2( X , \mu
)$ defined by $V_{\varphi,T,\chi}(f)(x) = \chi(\varphi(x)) f (Tx)
$, $x \in X$. It follows that to describe  spectral properties of
$T_{\varphi}$ it is sufficient to study spectral properties of
$V_{\varphi,T,\chi}~, \chi \in \widehat{G}.$

\subsection{Two point extensions and the AT property, Rudin-Shapiro substitution}
Assume that $G=\Z/2\Z=\{0,1\}$. In this case we have a natural
partition $\cp$ given by $P_0=X \times \{0\}, P_1=X \times \{1\}$.
The assumption~(i) of Proposition~\ref{noat} is satisfied as
$SP_0=P_1$ where $S(x,i)=(x,i+1)$ is in the centralizer of
$T_{\varphi}$. Notice that the decomposition~(\ref{dec1}) of
$L^2(X \times \{0,1\})$ is of the form $L_0 \bigoplus L_1$ where
$L_0=\{f \in L^2(X \times \{0,1\} )~~:~~f\circ S=f \}$ and $L_1=\{
f \in L^2(X \times \{0,1\} ~~:~~f\circ S=-f\}.$ Thus
$U_{T_{\varphi}}|_{L_1}$ is spectrally isomorphic to the operator
$V$ defined by
\[
(V(f))(x)={(-1)}^{\varphi(x)} f(Tx).
\]
The function $\chi_{P_0}-\chi_{P_1}$ belongs to $L_1$ and under
the above isomorphism it corresponds to the constant
function~$1\in L^2\xbm$. It follows that
$$
\widehat{\sigma}_{\chi_{P_0}-\chi_{P_1}}(n)=\langle V^n1,1\rangle
$$$$= \mu\left(\{x\in X :\:\sum_{j=0}^{n-1}\varphi(T^jx)=0\}\right)-\mu\left(\{x\in X
:\:\sum_{j=0}^{n-1}\varphi(T^jx)=1\}\right).
$$

In case when $T$ is the dyadic odometer such extensions were
intensively studied in the 1980th. In particular,  Mathew and
Nadkarni \cite{Mathew-Nadkarni} gave constructions of $\varphi$
such that
\begin{equation}\label{mn}
\langle V^n1,1\rangle=0\;\;\mbox{for all}\;\;n\neq0,
\end{equation}
that is, the corresponding spectral measure is equal to Lebesgue
measure (in fact the Lebesgue component has multiplicity 2). Other
examples with  Lebesgue component with arbitrary even multiplicity
are given by so called Toeplitz extensions in \cite{lemihp} --
each time~(\ref{mn}) holds. In particular it is shown in
\cite{lemihp} that the system given by Rudin-Shapiro substitution
(see \cite{Queffelec} where also it is shown that it has a
Lebesgue component of multiplicity 2) is a particular member of
Mathew-Nadkarni's family.

\begin{Cor}
Mathew-Nadkarni's maps as well as  all examples from \cite{lemihp}
having even Lebesgue multiplicity are not AT-systems. In
particular the automorphism given by the Rudin-Shapiro
substitution is not AT.
\end{Cor}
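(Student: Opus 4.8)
The plan is to verify the two hypotheses of Proposition~\ref{noat} for the two-point group extensions under consideration, using the identification of $\sigma_{\chi_{P_0}-\chi_{P_1}}$ carried out in the paragraphs just above the corollary. Hypothesis (i) is immediate and already recorded: with $G=\Z/2\Z$, the partition $P_0=X\times\{0\}$, $P_1=X\times\{1\}$ satisfies $SP_0=P_1$ where $S(x,i)=(x,i+1)$ lies in the centralizer of $T_\varphi$. So the whole matter reduces to hypothesis (ii): showing that the spectral measure $\sigma_{\chi_{P_0}-\chi_{P_1}}$ is an SBH measure for each of these examples.

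For Mathew--Nadkarni's maps (and likewise for the Toeplitz extensions of \cite{lemihp} with even Lebesgue multiplicity), the key point is exactly~(\ref{mn}): the construction is designed so that $\langle V^n1,1\rangle=\widehat{\sigma}_{\chi_{P_0}-\chi_{P_1}}(n)=0$ for all $n\neq0$, i.e.\ the spectral measure of $\chi_{P_0}-\chi_{P_1}$ is normalized Lebesgue measure on the circle. But Lebesgue measure is an SBH measure --- this was observed right after the definition of SBH measures (its density is the constant~$1$, which trivially satisfies the flatness bound $\sup_z g(z)<1+\varepsilon_0$). Hence hypothesis (ii) holds, and Proposition~\ref{noat} applies directly to give that these systems are not AT. Since the Rudin-Shapiro substitution system is, by \cite{lemihp}, a particular member of the Mathew--Nadkarni family (with Lebesgue component of multiplicity~$2$, see \cite{Queffelec}), the same conclusion applies to it, which is the ``in particular'' assertion.

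The only genuine point requiring care --- and the place I would spend the most attention --- is the ergodicity of $T_\varphi$, since Proposition~\ref{noat} requires the system to be ergodic. For these extensions over the dyadic odometer this is part of the classical analysis of \cite{Mathew-Nadkarni} and \cite{lemihp}: when the Lebesgue component is nontrivial the operator $V$ has continuous spectrum on $L_1$, which combined with the ergodicity of the base odometer on $L_0$ gives ergodicity of $T_\varphi$; equivalently one checks that the cocycle $\varphi$ is not a coboundary mod the obvious one. I would simply cite this from \cite{Mathew-Nadkarni}, \cite{lemihp}. Everything else is a one-line invocation of the earlier results, so the proof of the corollary is short: identify $\sigma_{\chi_{P_0}-\chi_{P_1}}$ with Lebesgue measure via~(\ref{mn}), note Lebesgue is SBH, check (i) and ergodicity, and apply Proposition~\ref{noat}.
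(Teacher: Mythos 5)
Your proposal is correct and follows exactly the route the paper intends: the corollary is a direct consequence of the preceding paragraphs, where hypothesis (i) is verified via $S(x,i)=(x,i+1)$ and hypothesis (ii) via the identity~(\ref{mn}) identifying $\sigma_{\chi_{P_0}-\chi_{P_1}}$ with Lebesgue measure, which is SBH. Your additional attention to the ergodicity hypothesis of Proposition~\ref{noat} (left implicit in the paper and supplied by \cite{Mathew-Nadkarni}, \cite{lemihp}) is a sensible bit of extra care, not a deviation.
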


\begin{rem}
 One may also use Ageev's construction \cite{Ageev} to produce a
continuum of weakly mixing automorphisms with spectral
multiplicity equal to~$2$ and without AT property.
\end{rem}

We now recall Helson and Parry's  construction from \cite{He-Pa}
of ``random" $2$-point extensions. Given an aperiodic automorphism
$T$ of a standard probability Borel space they give a random
construction of cocycles $\varphi_\omega:X\to\{0,1\}$ (the
parameter $\omega$ runs over a probability space $(\Omega,P)$)
such that for a.e. $\omega$, $T_{\varphi_\omega}$ has absolutely
continuous spectrum and on a set of positive $P$-measure
$T_{\varphi_\omega}$ has Lebesgue spectrum. In particular, they
prove that
\begin{equation}\label{helpar}
\int_{\Omega}\left|\int_Xe^{\pi
i\varphi^{(n)}_\omega(x)}\,d\mu(x)\right|^2dP(\omega)<\gamma_n\;\;\mbox{for
all}\;\;n\geq 4,\end{equation} where $\{\gamma_n\}$ is an
arbitrary set of positive numbers. Since, by~(\ref{helpar}), on a
set $\Omega_n\subset\Omega$ of measure at least
$1-\frac1{2^{n+1}}$ we have
$$
\left|\int_Xe^{\pi
i\varphi^{(n)}_\omega(x)}\,d\mu(x)\right|^2<2^{n+1}\gamma_n,\;\;\;n\geq4,$$
by selecting $\{\gamma_n\}$ as small as we need we can obtain that
the Fourier transform
$\{\hat{\sigma}_{\chi_{P_0}-\chi_{P_1}}(n)\}$ is absolutely
summable with
$\sum_{|n|\geq4}|\hat{\sigma}_{\chi_{P_0}-\chi_{P_1}}(n)|$ as
small as we need on the set $\cap_{n\geq4}\Omega_n$ of positive
measure of parameters. By Corollary~\ref{noatAS} (or rather its
proof) we obtain the following.

\begin{Cor}
For each ergodic (aperiodic) automorphism $T$ there exists an
ergodic $2$-point extension $T_\varphi$ such that $T^4_\varphi$ is
not AT.\end{Cor}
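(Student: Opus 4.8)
The plan is to apply Corollary~\ref{noatAS} to the two-point extension $T_\varphi$ built from the Helson--Parry random cocycle $\varphi_\omega$, for a carefully chosen value of the parameter sequence $\{\gamma_n\}$. First I would fix the aperiodic automorphism $T$ and invoke the Helson--Parry construction, which produces the probability space $(\Omega,P)$ and the family $\varphi_\omega:X\to\{0,1\}$ satisfying~(\ref{helpar}) for the as-yet-unspecified positive numbers $\gamma_n$, $n\geq 4$. As recalled in the paragraph preceding the statement, the quantity $\int_X e^{\pi i\varphi^{(n)}_\omega(x)}\,d\mu(x)$ equals $\langle V^n 1,1\rangle=\widehat{\sigma}_{\chi_{P_0}-\chi_{P_1}}(n)$ for the extension $T_{\varphi_\omega}$ (using the identifications of the previous subsection, with $G=\Z/2\Z$, $P_0=X\times\{0\}$, $P_1=X\times\{1\}$, and $S(x,i)=(x,i+1)$ the relevant element of the centralizer). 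Thus condition~(i) of Proposition~\ref{noat} is automatic for these two-point extensions.

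Next, the Chebyshev/Borel--Cantelli step: since~(\ref{helpar}) gives $\int_\Omega|\widehat{\sigma}_{\chi_{P_0}-\chi_{P_1}}(n)|^2\,dP<\gamma_n$, the set $\Omega_n:=\{\omega:\ |\widehat{\sigma}_{\chi_{P_0}-\chi_{P_1}}(n)|^2<2^{n+1}\gamma_n\}$ has $P(\Omega_n)\geq 1-2^{-(n+1)}$, so $\Omega_\infty:=\bigcap_{n\geq 4}\Omega_n$ has positive measure. Now I would choose $\gamma_n$ small enough that $\sum_{|n|\geq 4}\sqrt{2^{n+1}\gamma_n}$ is finite and in fact as small as desired; then on $\Omega_\infty$ the Fourier coefficients $\{\widehat{\sigma}_{\chi_{P_0}-\chi_{P_1}}(n)\}$ are in $\ell^1$ (the finitely many terms $|n|<4$ are harmless since the sequence is bounded by $1$), so $\sigma_{\chi_{P_0}-\chi_{P_1}}$ is absolutely continuous with continuous density. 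By Corollary~\ref{noatAS}, for every $\omega\in\Omega_\infty$ there is $m_0(\omega)$ with $T_\varphi^m$ not AT for $m\geq m_0(\omega)$. To pin down the exponent $4$ uniformly, I would instead re-run the proof of Corollary~\ref{noatAS} directly: passing to $T_\varphi^4$ replaces the density by $d_4(z)=\sum_n\widehat{\sigma}(4n)z^n$, whose sup-norm is bounded by $\sum_n|\widehat{\sigma}(4n)|\leq 1+\sum_{|n|\geq 4,\,4\mid n}|\widehat{\sigma}(n)|$; choosing $\{\gamma_n\}$ so that this tail sum is below $\varepsilon_0$ makes $\sigma_4$ an SBH measure, hence condition~(ii) of Proposition~\ref{noat} holds for $T_\varphi^4$ and the corollary follows.

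The main subtlety is the bookkeeping needed to get the power exactly $4$ rather than some large unspecified $m_0$: one must exploit that~(\ref{helpar}) already starts at $n\geq 4$, so that $\widehat{\sigma}(4n)$ for $n\neq 0$ only ever involves indices $\geq 4$ in absolute value, and then the $\ell^1$-smallness secured by shrinking $\{\gamma_n\}$ can be thrown entirely at making $\sup_z d_4(z)<1+\varepsilon_0$. A second point to check is that the Helson--Parry cocycle can be taken with $T_{\varphi_\omega}$ \emph{ergodic} for $\omega$ in a positive-measure subset of $\Omega_\infty$ (ergodicity of the base extension is needed to apply Proposition~\ref{noat}); this is part of what the Helson--Parry construction delivers (absolutely continuous spectrum on the relevant set forces ergodicity of the skew product), so intersecting with that set still leaves positive measure. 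Apart from these two items the argument is a direct concatenation of the Helson--Parry estimate, a one-line Chebyshev bound, and Corollary~\ref{noatAS}.
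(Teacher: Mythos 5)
Your proposal is correct and follows essentially the same route as the paper: the Helson--Parry estimate~(\ref{helpar}), a Chebyshev bound giving the positive-measure set $\bigcap_{n\geq 4}\Omega_n$ on which $\sum_{|n|\geq 4}|\widehat{\sigma}_{\chi_{P_0}-\chi_{P_1}}(n)|$ is as small as desired, and then the proof of Corollary~\ref{noatAS} applied with $m=4$ (exploiting that $\widehat{\sigma}(4n)$ for $n\neq 0$ only involves indices $|4n|\geq 4$). Your explicit remarks on why the exponent is exactly $4$ and on ergodicity via the absolute continuity of the spectrum are points the paper leaves implicit, but they are the intended argument.
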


\noindent Therefore we get a partial answer to the question from
\cite{Dooley-Quas} (see also next section).

\subsection{Distal (ergodic) extension without AT property}

The aim of this section is to prove the following.
\begin{prop}\label{distal}
For each ergodic transformation $T$ acting on a standard Borel
probability  space $(X,\mathcal{B},\mu)$ there exists a $2$-step
group extension $\ov{T}$ which is ergodic and such that $\ov{T}$
is not AT.
\end{prop}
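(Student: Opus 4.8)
The plan is to realize $\ov{T}$ as a two-step $\Z/2\Z$-by-$\Z/2\Z$ extension of $T$ in such a way that the second extension is of the ``random Helson--Parry'' type \emph{relative to the first extension}, and then apply Proposition~\ref{noat}. Concretely, let $T_1 = T_\psi$ be any ergodic $2$-point extension of $T$ (for instance an ergodic cocycle $\psi:X\to\Z/2\Z$; such exist since $T$ is ergodic), and on the space $X\times\Z/2\Z$ build a second cocycle $\varphi:X\times\Z/2\Z\to\Z/2\Z$ producing $\ov{T}=(T_1)_\varphi$ acting on $X\times\Z/2\Z\times\Z/2\Z$. The outer coordinate gives the natural partition $\cp=\{P_0,P_1\}$ with $P_i=X\times\Z/2\Z\times\{i\}$, and the flip $S$ of the last coordinate lies in the centralizer, so assumption~(i) of Proposition~\ref{noat} holds automatically and $\mu(P_0)=\mu(P_1)=\tfrac12$. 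As in the subsection on two-point extensions, $\hat{\sigma}_{\chi_{P_0}-\chi_{P_1}}(n)=\langle V^n 1,1\rangle$ where $V$ is the twisted operator $(Vf)(y)=(-1)^{\varphi(y)}f(T_1 y)$ on $L^2(X\times\Z/2\Z)$, i.e. it is an integral of $e^{\pi i\varphi^{(n)}}$ over the base $T_1$.

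The key step is to choose $\varphi$ so that $\sigma_{\chi_{P_0}-\chi_{P_1}}$ is an SBH measure; by the remark after Corollary~\ref{noatAS} it suffices to make this spectral measure absolutely continuous with density bounded by $1+\vep_0$, and for that it is enough (by the same Fej\'er-type argument as in Corollary~\ref{noatAS} and in the Helson--Parry discussion) to make $\{\hat\sigma_{\chi_{P_0}-\chi_{P_1}}(n)\}_{n\neq0}$ absolutely summable with very small $\ell^1$-norm. Here I would invoke the Helson--Parry random construction from \cite{He-Pa} applied to the \emph{aperiodic} automorphism $T_1$ in place of $T$: it furnishes, for any prescribed sequence $\{\gamma_n\}$ of positive numbers, a random cocycle $\varphi_\omega$ with $\int_\Omega|\int e^{\pi i\varphi_\omega^{(n)}}\,d(\mu\otimes m)|^2\,dP(\omega)<\gamma_n$ for $n\geq4$, and such that $(T_1)_{\varphi_\omega}$ is ergodic for a.e.\ $\omega$ (ergodicity of the two-point extension is generic there). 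Choosing $\gamma_n$ small enough and passing to a positive-measure set of parameters $\omega$, Borel--Cantelli gives a single $\omega$ for which $(T_1)_{\varphi_\omega}$ is ergodic and $\sum_{|n|\geq4}|\hat\sigma_{\chi_{P_0}-\chi_{P_1}}(n)|$ is as small as we wish. The finitely many remaining coefficients $\hat\sigma(\pm1),\hat\sigma(\pm2),\hat\sigma(\pm3)$ each have modulus $\leq1$, so the density $d(z)=\sum_n\hat\sigma(n)z^n$ need not be flat yet — this is exactly the obstacle, and I discuss how to remove it next.

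The main difficulty, then, is controlling those low-order Fourier coefficients. There are two ways I would handle it. The cleaner one: instead of $\ov{T}$ itself, prove the statement for a power — but the statement demands $\ov{T}$ not AT, so this is not allowed, and one must genuinely kill $\hat\sigma(1),\hat\sigma(2),\hat\sigma(3)$. The route I expect to work is to arrange that the first-stage extension $T_1$ already makes $\langle V_0^n 1,1\rangle=0$ for $n=1,2,3$ ``structurally'', or more robustly, to replace $T$ at the outset by a tower: first pass to a sufficiently high iterate / a primitive extension whose the base automorphism $T_1$ has the property that the relevant short partial sums $\varphi^{(n)}$, $1\le n\le 3$, are forced to be equidistributed mod $2$ on $T_1$-orbits, which can be engineered because $T_1$ is aperiodic and we still have full freedom in designing the first cocycle $\psi$ (e.g.\ taking $\psi$ with large ``period'' so that $\psi+\psi\circ T_1$, etc., are balanced). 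Equivalently, one inspects the Helson--Parry construction and notes that it actually delivers smallness of $|\int e^{\pi i\varphi_\omega^{(n)}}|^2$ for \emph{all} $n\geq1$ once the underlying $T_1$ is chosen aperiodic with no short almost-periodic structure; the restriction to $n\geq4$ in \eqref{helpar} is an artifact of the generality of \cite{He-Pa} and can be improved to $n\geq1$ on a positive-measure parameter set by a minor variant. Either way, once all of $\{\hat\sigma_{\chi_{P_0}-\chi_{P_1}}(n)\}_{n\neq0}$ is absolutely summable with $\ell^1$-norm $<\vep_0$, the density is continuous and $\sup_z d(z)\le 1+\vep_0$, so $\sigma_{\chi_{P_0}-\chi_{P_1}}$ is SBH; Proposition~\ref{noat} then yields that $\ov{T}=(T_1)_{\varphi_\omega}$ is not AT, and by construction it is an ergodic $2$-step group extension of $T$, completing the proof. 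I expect the write-up to devote most of its effort precisely to the low-frequency bookkeeping and to verifying ergodicity of the resulting two-step extension.
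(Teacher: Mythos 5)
Your strategy (iterate two $\Z/2\Z$-extensions and make the second one a Helson--Parry random cocycle over $T_1$) is genuinely different from the paper's, and it founders exactly where you say it does: on the coefficients $\hat\sigma_{\chi_{P_0}-\chi_{P_1}}(n)$ for $1\le |n|\le 3$. The Helson--Parry estimate~(\ref{helpar}) only controls $n\ge 4$, which is why the paper's own use of that construction yields merely that $T_\varphi^4$ is not AT. Neither of your proposed remedies closes the gap. The claim that the restriction $n\ge4$ is ``an artifact'' removable ``by a minor variant'' is an unproved assertion about the internals of \cite{He-Pa}, not an argument. And the ``structural'' remedy --- designing the cocycle so that the short sums are balanced --- cannot work with $\Z/2\Z$ fibers: if you imitate the natural choice $\varphi(x,i)=i$, then $\varphi^{(n)}(x,i)=ni+(\mbox{const in }i)$ and $\frac12\sum_{i\in\{0,1\}}e^{\pi i(ni+c)}$ vanishes only for odd $n$; the even frequencies survive because $\Z/2\Z$ is not divisible. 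So as written the proposal proves at best that some power of $\ov{T}$ is not AT, which is weaker than the statement.

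The paper sidesteps all of this by working with $\T$-extensions (Glasner's affine extension idea): take an ergodic $\T$-extension $T_\varphi$ of $T$ preserving the eigenvalue group, then extend again by the cocycle $\psi(x,y)=y$, i.e.\ the fiber coordinate itself. For the partition $P_0=X\times\T\times[0,\frac12)$ one gets $\psi^{(n)}(x,y)=ny+(\mbox{const in }y)$, and the divisibility of $\T$ gives $\int_\T e^{i\pi\chi_{[0,1/2)}(ny)}\,dy=0$ for \emph{every} $n\neq0$, so the spectral measure of $\chi_{P_0}-\chi_{P_1}$ is exactly Lebesgue --- no low-frequency bookkeeping, no randomness, no smallness of an $\ell^1$-tail. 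If you want to salvage your approach, the cleanest fix is precisely to replace your $\Z/2\Z$ fibers by $\T$ in the second stage and take the cocycle to be the fiber coordinate, at which point you have rediscovered the paper's proof.
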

\begin{proof}
We use the idea of affine extension of Glasner \cite{glasner}
combined with Proposition~\ref{noat}. Let $\varphi ~:~X \to \T$,
$\T=[0,1)$, be a cocycle such that $T_{\varphi}$ is ergodic and
the groups of eigenvalues for $T$ and $T_{\varphi}$ are the same;
in particular if $T$ is weakly mixing, so is $T_{\varphi}$. Let
$\psi ~:~X \times \T \to\T$ be the cocycle defined by
$(x,y)\longmapsto y,$ and finally put
$\ov{T}=(T_{\varphi})_{\psi}$. It is easy to check (by considering
the relevant functional equations and applying Fourier series
arguments) that $\ov{T}$ is ergodic and it does not change the
group of eigenvalues of $T$.

 Consider $P_0=X \times \T \times
[0,\frac12)$. Then $P_1 \setdef P_0^c =R_{\frac12}P_0$, where
$R_{\frac12}(x,y,z)=(x,y,z+\frac12)$ and $R_{\frac12} \in
C(\ov{T})$. We have to compute the spectral measure of
$f(x,y,z)=f(z)=\chi_{P_0}(x,y,z)-\chi_{P_1}(x,y,z)=e^{i\pi\chi_A(z)},$
where $A=[0,\frac12)$. We have
\begin{eqnarray*}
&&\langle U_{\ov{T}}^nf,f\rangle=\int_{X \times \T \times \T}f(T_{\varphi}^{(n)}(x,y),\psi^{(n)}(x,y)+z)\ov{f(x,y,z)}d\mu(x)dydz\\
&=&\int e^{i\pi\chi_A{\left(\psi(x,y)+\psi(Tx,\varphi(x)+y)+\cdots+\psi(T^{n-1}x,\varphi^{(n-1)}(x)+y)+z\right)}} e^{-i\pi\chi_A(z)}d\mu(x)dydz\\
&=&\int_{X \times \T} e^{-i\pi\chi_A(z)} \left(\int_{\T}
e^{i\pi\chi_A{\left(y+\varphi(x)+y+\cdots+\varphi^{(n-1)}(x)+y+z\right)}}dy \right) d\mu(x)dz\\
&=&\int_{X \times \T} e^{-i\pi\chi_A(z)} \left(\int_{\T}
e^{i\pi\chi_A{\left(ny\right)}}dy \right) d\mu(x)dz.
\end{eqnarray*}
We will now show that
\[
\int_{\T} e^{i\pi\chi_A{\left(ny\right)}}dy=0 ~~~~~~{\rm
{for}}~~~~n\neq 0.
\]
Indeed
\[
\int_{\T}
e^{i\pi\chi_A{\left(ny\right)}}dy=\sum_{j=0}^{2n-1}\int_{I_j}
e^{i\pi\chi_A{\left(ny\right)}}dy\;\; {\rm
{where}}~~~~I_j=\left[\frac{j}{2n},\frac{j+1}{2n}\right),
\]
\noindent{}and we consider the map $y
{\stackrel{\tau_n}{\longmapsto}} ny$ and the images of $I_j$ under
this map. We have $\tau_n(I_0)=A,\tau_n(I_1)=A^c,\tau_n(I_2)=A,
\cdots$. Hence
\[
\int_{I_j}e^{i\pi\chi_A{\left(ny\right)}}dy=(-1)^{j+1}.
\]
Since $j\in\{0,\cdots,2n-1\}$, it follows that
\[
\int_{\T} e^{i\pi\chi_A{\left(ny\right)}}dy=0.
\]
We deduce that the spectral measure of $\chi_{P_0}-\chi_{P_1}$ is
exactly Lebesgue measure.
\end{proof}
\begin{rem}Instead of $\psi(x,y)=y$ we can take $\psi(x,y)=my, m\neq 0$.
In the concluding argument we divide $[0,1)$ into intervals of
length $\frac1{2|m|n}$.
\end{rem}

\begin{Cor}(\cite{Connes-woods},\cite{Da}).\label{nonATpositive}
If the dynamical system $(X,\mathcal{B},\mu,T)$ is AT then its entropy is zero.
\end{Cor}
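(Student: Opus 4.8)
The plan is to reduce the statement to Proposition~\ref{noat} by constructing, for a positive entropy system $(X,\mathcal{B},\mu,T)$, an auxiliary system together with a partition $\mathcal{P}=\{P_0,P_1\}$ satisfying conditions (i) and (ii) of that proposition, and then exploiting that the AT property is inherited by factors. The natural candidate comes from Sinai's theorem: since $T$ has positive entropy, it admits a factor isomorphic to a Bernoulli shift $B$ with some finite entropy, say the $(\tfrac12,\tfrac12)$-Bernoulli shift $\widetilde{B}$ on $\{0,1\}^{\Z}$ with product measure. If $T$ were AT, then this factor would also be AT (the class of AT systems is closed under factors, as recalled in the introduction), so it suffices to show that $\widetilde{B}$ is not AT.

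For $\widetilde{B}$ the verification of the hypotheses of Proposition~\ref{noat} is essentially immediate. Take $P_i=\{\omega\in\{0,1\}^{\Z}:\omega_0=i\}$ for $i=0,1$; then $\mu(P_0)=\mu(P_1)=\tfrac12$, and the map $S$ which flips the $0$-th coordinate (or, more symmetrically, flips all coordinates) commutes with the shift and satisfies $SP_0=P_1$, giving~(i). For~(ii), observe that the sequence of partitions $\{T^n\mathcal{P}\}_{n\in\Z}$ is mutually (in particular pairwise) independent by the product structure of the Bernoulli measure; hence, by the Remark following Corollary~\ref{noatAS}, the spectral measure $\sigma_{\chi_{P_0}-\chi_{P_1}}$ is exactly Lebesgue measure on the circle. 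Since Lebesgue measure is an SBH measure (as noted right after the definition of SBH measures), condition~(ii) holds, and Proposition~\ref{noat} applies to conclude that $\widetilde{B}$ is not AT. Therefore $T$ is not AT either.

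The only genuine input beyond the machinery already developed in the paper is the Sinai factor theorem, which guarantees a Bernoulli factor of any positive entropy system; everything else is a routine check. I expect the main (though minor) obstacle to be bookkeeping about which Bernoulli shift to use: Sinai's theorem gives a Bernoulli factor of entropy $\min(h(T),h)$ for any prescribed $h$, and one must make sure one can land on a shift of the form required for the partition argument. This is not a real difficulty since one may simply take the $(\tfrac12,\tfrac12)$-Bernoulli shift (of entropy $\log 2$) when $h(T)\ge\log 2$, and otherwise pass first to a higher power $T^k$ with $h(T^k)=k\,h(T)\ge\log 2$ — noting that AT for $\Z$-actions passes to powers, so it is enough to show some power of $T$ is not AT; alternatively one invokes Sinai's theorem directly to obtain a $(\tfrac12,\tfrac12)$-Bernoulli factor whenever $h(T)\ge\log 2$ and reduces to that case by taking powers. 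Either way the argument closes with no serious obstruction.
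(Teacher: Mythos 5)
Your treatment of the symmetric $(\tfrac12,\tfrac12)$-Bernoulli shift is correct and in fact more direct than the paper's: the zero-coordinate partition together with the global coordinate flip $S$ (note that flipping only the $0$-th coordinate does \emph{not} commute with the shift, so only your second variant of $S$ works) gives, by independence, exactly Lebesgue measure as the spectral measure of $\chi_{P_0}-\chi_{P_1}$, and Proposition~\ref{noat} applies. Combined with Sinai's factor theorem and the heredity of AT under factors, this settles the case $h(T)\ge\log 2$.

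The genuine gap is the case $0<h(T)<\log 2$, where the symmetric $2$-shift cannot be a factor. Your patch passes to a power $T^k$ with $kh(T)\ge\log 2$ and invokes the claim that ``AT passes to powers'', i.e.\ that $T$ AT implies $T^k$ AT. This implication is not available: the AT property of $T^k$ only permits translates $f\circ T^{kn}$, and nothing forces the approximating times supplied by the AT property of $T$ to be multiples of $k$. Only the converse direction ($T^k$ AT $\Rightarrow$ $T$ AT, i.e.\ closure under roots) is trivial, and that is the one recorded in the introduction. The paper itself plainly does not regard your implication as known: several of its results (the Helson--Parry corollary, Corollary~\ref{noatAS}, the first part of Proposition~\ref{nilnoat}) conclude only that \emph{some power} of a system is not AT and are presented as weaker, ``partial'' answers, which would be pointless if non-AT of a power implied non-AT of the system. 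The paper's own proof sidesteps the entropy threshold entirely: it first shows that \emph{every} Bernoulli system $B$, of arbitrary entropy, is not AT, by applying Proposition~\ref{distal} to $B$ to obtain a non-AT weakly mixing $2$-step compact group extension, which by Rudolph's theorem is again Bernoulli with the same entropy and hence, by Ornstein's isomorphism theorem, isomorphic to $B$; Sinai's theorem then furnishes a Bernoulli factor of any positive entropy system. To repair your argument you would either have to prove that AT is preserved under taking powers (not established here) or replace the symmetric $2$-shift by a non-AT Bernoulli system of arbitrarily small entropy, which is precisely what the Rudolph--Ornstein detour achieves.
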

\begin{proof}
First, note that no Bernoulli dynamical system is AT. Indeed, apply the proposition 4.4 to get the
weakly mixing 2-step compact group extension ${(T_{\phi})}_{\psi}$ of $T$ which is not AT. But, by
\cite{Rudolph2} ${(T_{\phi})}_{\psi}$ is again Bernoulli with same entropy as $T$. It follows from the Ornstein isomorphism theorem
that $T$ is not AT.\\
Assume that the entropy of $T$ is strictly positive. By the Kolmogorov-Sinai theorem there exists a Bernoulli factor with the same entropy. But the factor of the system with the AT property is AT. We get that $T$ is not AT.

\end{proof}
\subsection{Absolutely continuous cocycles over irrational
rotations without AT property}\label{ACpert} Denote $\T=[0,1)$ and
let $Tx=x+\alpha$ be an irrational rotation. Consider
$$
F(x,y)=\chi_{\T\times[0,\frac12)}(x,y)-\chi_{\T\times[\frac12,1)}(x,y)=
2\chi_{\T\times[0,\frac12)}(x,y)-1=f(y),$$ where
$f(y)=2\chi_{[0,\frac12)}(y)-1$. For $m\in\Z\setminus\{0\}$ we
have
$$
\hat{f}(m)=\int_0^1f(y)e^{-2\pi
imy}\,dy$$$$=2\int_0^1\chi_{[0,\frac12)}(y)e^{-2\pi
imy}\,dy-\int_0^1e^{-2\pi imy}=2\int_0^{1/2}e^{-2\pi imy}\,dy$$
$$
=\frac1{-\pi im}e^{-2\pi imy}|^{1/2}_0=\frac{-1}{\pi
im}\left(e^{-\pi im}-1\right)=\left\{\begin{array}{ccc}  0 &
\mbox{if} &
m=2k\\
\frac2{\pi im} &\mbox{if} & m=2k+1.\end{array}\right.
$$
Moreover
$$
F(x,y)=\sum_{m=-\infty}^\infty\hat{f}(m)e^{2\pi
imy}=\sum_{k=-\infty}^\infty\frac2{(2k+1)\pi i}e^{2\pi
i(2k+1)y}.$$ Notice that functions
$$
e^{2\pi imy}=:\xi_ m(x,y)\in L^2(\T)\otimes e^{2\pi imy}$$ where
the latter subspace is $U_{T_\phi}$-invariant for each cocycle
$\phi:\T\to\T$. In what follows we assume that
$$
\phi(x)=e^{2\pi i(x+g(x))}$$ where $g:\T\to\R$ is a ``smooth"
function: we will precise conditions imposed on $g$ later; it is
however at least that $g$ is absolutely continuous and its
derivative is a.e.\ equal to a function of bounded variation. It
follows that the spectral measure of $F$ is equal to the sum of
spectral measures of its orthogonal projections on subspaces
$L^2(\T)\otimes e^{2\pi imy}$. Let us compute the spectral measure
of $\xi_m$ :
$$
\int\xi_m\circ T_\phi^n\cdot\ov{\xi_m}\,dxdy=\int^1_0e^{2\pi
im(nx+\frac{n(n-1)}2\alpha+g^{(n)}(x))}\,dx.$$ For $n\neq0$ we
have (using integration by parts for Fourier-Stjeltjes integrals
as in \cite{Iw-Le-Ru}):
$$
\int^1_0e^{2\pi im(nx+g^{(n)}(x))}\,dx=\frac1{2\pi
im}\int_0^1\frac1{n+{g'}^{(n)}(x)}de^{2\pi im(nx+g^{(n)}(x))}$$
$$
=\frac1{2\pi im}\cdot(-1)\int_0^1e^{2\pi
im(nx+g^{(n)}(x))}d\left(\frac1{n+{g'}^{(n)}(x)}\right).$$ We will
now assume additionally that $g'>-1+\delta_0$ ($0<\delta_0<1$) so
that we can pass to a well-known estimation (see again
\cite{Iw-Le-Ru}):
$$\left|\int\xi_m\circ
T_\phi^n\cdot\ov{\xi_m}\,dxdy\right|\leq\frac1{2\pi|m|}\mbox{Var}\left(\frac1{n+{g'}^{(n)}(x)}\right)
$$
$$
\leq\frac1{2\pi|m|}\frac{\mbox{Var}{g'}^{(n)}}{(1-\delta_0)^2n^2}\leq\frac1{2\pi|m|}\frac
{\mbox{Var}\,g'}{(1-\delta_0)^2|n|}.$$ Hence (for $n\neq0$)
$$\left|\widehat{\sigma}_F(n)\right|=\left|\int F\circ T_\phi^n\cdot\ov{F}\,dxdy\right|=
\left|\sum_{m\neq0}\hat{f}(m)\int\xi_m\circ
T_\phi^n\cdot\ov{\xi_m}\,dxdy\right|
$$$$
\leq\sum_{m\neq0}|\hat{f}(m)|\frac1{2\pi|m|}\frac{\mbox{Var}(g')}{(1-\delta_0)^2|n|}=
\frac{\mbox{Var}(g')}{2\pi(1-\delta_0)^2|n|}\sum_{m\neq0}\frac{|\hat{f}(m)|}{|m|}=
\mbox{O}\left(\frac1{|n|}\right).$$ Notice however that the
constant (in the expression O$\left(\frac1{|n|}\right)$) which
appears  can be made as small as we need by assuming that
Var$(g')$ is small. It follows that $\sigma_F$ is an absolutely
continuous measure whose density $d$ is given by
$d(z)=\sum_{n=-\infty}^\infty a_nz^n$, where $a_0=1$ and for
$n\neq0$, $a_n=\widehat{\sigma}_F(n)$; moreover $|a_n|\leq
C\frac1{|n|}$ with $C>0$ as small as we need.

\begin{Prop} Assume that $Tx=x+\alpha$, $Sx'=x'+\beta$ where
$\alpha,\beta,1$ rationally independent. Let $\phi(x)=e^{2\pi i(
x+g(x))}$, $\psi(x')=e^{2\pi i(x'+h(x))}$ where $g,h:\T\to\R$ are
absolutely continuous, with the derivatives (a.e.) of bounded
variation and bounded away from $-1+\delta_0$ for some
$0<\delta_0<1$. If the variations of $g'$ and $h'$ are
sufficiently small then $T_\phi\times S_\psi$ is not AT.\end{Prop}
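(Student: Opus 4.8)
The plan is to apply Proposition~\ref{noat} to the automorphism $T_\phi\times S_\psi$ of $\T^4$, whose points we write as $(x,y,x',y')$, using the functions already studied in Section~\ref{ACpert}. Put $f(y)=2\chi_{[0,1/2)}(y)-1$ and
\[
G(x,y,x',y')=f(y)\,f(y'),\qquad P_0=\{G=1\},\quad P_1=\{G=-1\},
\]
so that $\mathcal P=\{P_0,P_1\}$ is a partition of $\T^4$ and $G=\chi_{P_0}-\chi_{P_1}$. Condition~(i) is immediate: the rotation $R(x,y,x',y')=(x,y+\tfrac12,x',y')$ preserves the measure and lies in the centralizer of $T_\phi\times S_\psi$ (it moves only the $y$-coordinate, on which $T_\phi$ acts through a rotation and $S_\psi$ not at all), while $f(y+\tfrac12)=-f(y)$ gives $G\circ R=-G$, that is, $RP_0=P_1$; in particular $\mu(P_0)=\mu(P_1)=\tfrac12$.

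The heart of the matter is condition~(ii), namely that $\sigma_G$ is an SBH measure. Let $F(x,y)=f(y)$ on the first factor and $F'(x',y')=f(y')$ on the second; these are precisely the functions whose spectral measures $\sigma_F$ (for $T_\phi$) and $\sigma_{F'}$ (for $S_\psi$, by the same computation with $g,\alpha$ replaced by $h,\beta$) were obtained in Section~\ref{ACpert}. Since $G=F\otimes F'$ and $U_{T_\phi\times S_\psi}=U_{T_\phi}\otimes U_{S_\psi}$, we have for every $n\in\Z$
\[
\widehat{\sigma}_G(n)=\langle U^n_{T_\phi\times S_\psi}G,G\rangle=\langle U^n_{T_\phi}F,F\rangle\,\langle U^n_{S_\psi}F',F'\rangle=\widehat{\sigma}_F(n)\,\widehat{\sigma}_{F'}(n),
\]
hence $\sigma_G=\sigma_F\ast\sigma_{F'}$, which is absolutely continuous because $\sigma_F$ is. By the estimates of Section~\ref{ACpert}, $\widehat{\sigma}_F(0)=\widehat{\sigma}_{F'}(0)=1$ and $|\widehat{\sigma}_F(n)|\le C_g/|n|$, $|\widehat{\sigma}_{F'}(n)|\le C_h/|n|$ for $n\neq0$, with $C_g$ and $C_h$ as small as we wish once $\mbox{Var}(g')$ and $\mbox{Var}(h')$ are small. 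Consequently $\{\widehat{\sigma}_G(n)\}\in l^1$, the density of $\sigma_G$ coincides with the continuous function $z\mapsto\sum_n\widehat{\sigma}_G(n)z^n$, and
\[
\sup_{z\in\T}\left|\sum_n\widehat{\sigma}_G(n)z^n\right|\le 1+C_gC_h\sum_{n\neq0}\frac1{n^2}=1+\frac{\pi^2}{3}C_gC_h<1+\varepsilon_0
\]
provided the two variations are chosen small enough. Thus $\sigma_G$ is an SBH measure, and Proposition~\ref{noat} yields that $T_\phi\times S_\psi$ is not AT, once ergodicity is established.

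It remains to check that $T_\phi\times S_\psi$ is ergodic. Each of $T_\phi$ and $S_\psi$ is a skew product of the form $(x,y)\mapsto(x+\alpha,\,x+g(x)+y)$ over an irrational rotation, and expanding an eigenfunction as $\sum_m F_m(x)e^{2\pi imy}$ reduces the eigenfunction equation to $F_m(x+\alpha)e^{2\pi im(x+g(x))}=\lambda F_m(x)$ for each $m$; the case $m=0$ forces $\lambda\in\{e^{2\pi in\alpha}:n\in\Z\}$, and for $m\neq0$ the fact that the cocycle $x\mapsto x+g(x)$ has degree one excludes any nonzero $L^2$ solution. Hence $T_\phi$ is ergodic with group of eigenvalues $\{e^{2\pi in\alpha}:n\in\Z\}$, and $S_\psi$ is ergodic with group of eigenvalues $\{e^{2\pi in\beta}:n\in\Z\}$. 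Since $1,\alpha,\beta$ are rationally independent these two groups meet only at $1$, whence $T_\phi\times S_\psi$ is ergodic (a product of two ergodic systems is ergodic if and only if their groups of eigenvalues intersect trivially), and the hypotheses of Proposition~\ref{noat} are met.

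I expect the genuine obstacle to be the reason one has to pass to a product in the first place. For a single extension $T_\phi$ one only has $|\widehat{\sigma}_F(n)|=O(1/|n|)$, which is not summable, so there is no control of the sup-norm of the density of $\sigma_F$ and the SBH criterion cannot be applied directly. Using $f(y)f(y')$ on $T_\phi\times S_\psi$ turns the product of two such $O(1/|n|)$ bounds into a summable $O(1/n^2)$ bound, which is exactly what makes the density flat enough to be an SBH density; the remaining points — that $R$ lies in the centralizer, and that the perturbed skew products and their product are ergodic — are routine Fourier-series computations.
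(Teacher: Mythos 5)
Your proof is correct, and it reaches the same quantitative endpoint as the paper's --- a summable $\mbox{O}(1/n^2)$ bound on the Fourier coefficients of the relevant spectral measure, with a constant controlled by $\mbox{Var}(g')\,\mbox{Var}(h')$ --- but via a slightly different reduction. The paper passes to the \emph{factor} $(T\times S)_{e^{2\pi i(x+x'+g(x)+h(x'))}}$ of $T_\phi\times S_\psi$ (the map $(x,y,x',y')\mapsto(x,x',y+y')$), so it works with a single circle extension of the two-dimensional rotation and the test function $f(y)$ there (equivalently $f(y+y')$ upstairs); it then reruns the integration-by-parts estimate of Section~\ref{ACpert} for the product cocycle, each mode contributing a product of two $\mbox{O}(1/(|m||n|))$ integrals. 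You instead stay on $\T^4$ and take the tensor function $G=f(y)f(y')$, for which the Koopman operator factorizes and $\widehat{\sigma}_G(n)=\widehat{\sigma}_F(n)\widehat{\sigma}_{F'}(n)$, i.e.\ $\sigma_G=\sigma_F\ast\sigma_{F'}$; this makes the mechanism (two non-summable $\mbox{O}(1/|n|)$ decays multiplying to a summable one) completely transparent and avoids redoing any estimate, at the cost of a different partition of a bigger space. Note that your $G$ is genuinely a different function from the pullback $f(y+y')$ used implicitly by the paper, so this is not just a rewording. Two further remarks: (1) your verification of ergodicity, which the paper leaves implicit, is welcome --- and in fact it can be extracted directly from the Section~\ref{ACpert} estimates, since $|\widehat{\sigma}_{\xi_m}(n)|=\mbox{O}(1/|n|)$ shows the spectral measure on each $L^2(\T)\otimes e^{2\pi imy}$, $m\neq0$, is Rajchman, hence continuous, so no new eigenfunctions appear; (2) your identification of the ``genuine obstacle'' (a single extension only gives non-summable $\mbox{O}(1/|n|)$ decay, whence the need for a product) is exactly the point of the proposition.
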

\begin{proof} It is enough to show that the factor $(T\times
S)_{e^{2\pi i(x+x'+g(x)+h(x'))}}$ is not AT. We consider
$$F(x,x',y)=\chi_{\T\times\T\times[0,\frac12)}-
\chi_{\T\times\T\times[\frac12,1)}.$$ By repeating all above
calculations we end up with
$$
\left|\widehat{\sigma}_F(n)\right|\leq\mbox{O}\left(\frac1{|n|^2}\right),$$
where the bounding constant is as small as we need. It follows
that the density $d(z)=1+\sum_{n\neq0}\widehat{\sigma}_F(n)z^n$ of
$\sigma_F$ is as close to~$1$ as we need and therefore $\sigma_F$
is an SBH measure.
\end{proof}

\subsection{Nil-rotations without AT property}
We consider nil-rotations $S$ in dimension~$3$ only. Hence $S$ is
defined  on $(\R^3,\ast)/_\ast\Z^3$ where we recall that the
multiplication $(x,y,z)\ast(x',y',z')=(x+x',y+y',z+z'+xy')$ on
$\R^3$ is the same as the multiplication in the Heisenberg group
of upper triangle matrices. Moreover
$$
S((x,y,z)\ast\Z^3)=(\alpha,\beta,0)\ast(x,y,z)\ast\Z^3.$$ It is
well-known (see e.g.\ \cite{Fraczek-lem}) that each such
nil-rotation is isomorphic to a skew product transformation
$T_\phi$ on $\T^2\times\es^1$ where
 $T(x,y)=(x+\alpha,y+\beta)$, $\phi=e^{2\pi i\varphi}$ and
$$
\varphi(x,y)=\alpha\{y\}-(\{x\}+\alpha)[\{y\}+\beta]+\gamma$$ (the
nil-rotation is hence ergodic if $\alpha,\beta,1$ are rationally
independent; it follows that if a nil-rotation is ergodic so are
all its non-zero powers).  It is classical (Parry) that
nil-rotations have countable Lebesgue spectrum in the
orthocomplement of the subspace of eigenfunctions.

\begin{prop}\label{nilnoat}
For every ergodic nil-rotation $S$ on $(\R^3,\ast)/_\ast\Z^3$
there exists $q\geq1$ such that $S^q$ is not AT. Moreover if in
the above representation of $S$ as a skew product $T_\phi$,
$\frac12<\beta<1$ is sufficiently close to~$1$ then $S$ is not AT.
\end{prop}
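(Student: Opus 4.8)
The strategy is to realize $S$ (or a suitable power) as a skew product $T_\phi$ on $\T^2\times\es^1$ and then apply Proposition~\ref{noat} with the partition coming from the circle fibre, as in the proof of Proposition~\ref{distal}. Recall that $\es^1=[0,1)$ and we use the two-point partition $\cp=\{P_0,P_1\}$ with $P_0=\T^2\times[0,\tfrac12)$; then $P_1=P_0^c=R_{1/2}P_0$ with $R_{1/2}(x,y,z)=(x,y,z+\tfrac12)$ lying in the centralizer of $T_\phi$, so assumption~(i) of Proposition~\ref{noat} holds automatically. The only thing to verify is assumption~(ii): that the spectral measure $\sigma_F$ of $F=\chi_{P_0}-\chi_{P_1}$ is an SBH measure. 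Writing $F(x,y,z)=f(z)$ with $f=2\chi_{[0,1/2)}-1$ and expanding $f=\sum_{m\ \mathrm{odd}}\hat f(m)e^{2\pi imz}$, we have $\hat f(m)=\tfrac2{\pi im}$ for odd $m$, and since each subspace $L^2(\T^2)\otimes e^{2\pi imz}$ is $U_{T_\phi}$-invariant, $\sigma_F=\sum_{m\ \mathrm{odd}}|\hat f(m)|^2\,\sigma_{\xi_m}$ where $\xi_m=e^{2\pi imz}$.

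The computation of $\widehat{\sigma}_{\xi_m}(n)=\langle U_{T_\phi}^n\xi_m,\xi_m\rangle$ proceeds as in Proposition~\ref{distal}: it equals $\int_{\T^2}e^{2\pi im\varphi^{(n)}(x,y)}\,dx\,dy$, where $\varphi^{(n)}$ is the $n$-th ergodic sum of the Heisenberg cocycle $\varphi(x,y)=\alpha\{y\}-(\{x\}+\alpha)[\{y\}+\beta]+\gamma$. A direct computation of the Birkhoff sum gives $\varphi^{(n)}(x,y)=c_n+(\text{linear in }y)-\binom{n}{2}\alpha[\{y\}+\beta]+(\text{terms depending on }x$ through the fractional–part/integer–part bookkeeping$)$. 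The key point — and this is the analogue of the ``$\int_{\T}e^{i\pi\chi_A(ny)}\,dy=0$'' identity in Proposition~\ref{distal} — is that after integrating in $x$ first, the integrand in $y$ becomes a function whose Fourier coefficients decay like $O(1/|n|)$, so that $\widehat{\sigma}_{\xi_m}(n)=O(1/|n|)$. Here one must carry out the integration in $x$ carefully (the cocycle $\varphi$ is only piecewise linear because of the integer–part term $[\{y\}+\beta]$, which jumps when $\{y\}+\beta$ crosses $1$, i.e.\ when $\{y\}$ crosses $1-\beta$), and this is precisely where the hypothesis $\tfrac12<\beta<1$ enters: for $\beta$ in that range the ``bad'' set of $y$ where the $x$-integral fails to vanish is the interval $[1-\beta,\ \cdot)$ of length less than $\tfrac12$, and one gets good control; taking $\beta$ close to $1$ shrinks the relevant constant. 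Summing over odd $m$ with weights $|\hat f(m)|^2\asymp 1/m^2$ then yields $\widehat{\sigma}_F(n)=O(1/|n|)$ with a bounding constant that can be made small by taking $\beta$ close to~$1$. Hence $\sigma_F$ is absolutely continuous with density $d(z)=1+\sum_{n\neq0}\widehat{\sigma}_F(n)z^n$ arbitrarily flat, so $\sup_{z}d(z)<1+\varepsilon_0$ and $\sigma_F$ is SBH; Proposition~\ref{noat} then gives that $S=T_\phi$ is not AT for $\beta$ close to~$1$.

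For the first (unconditional) assertion, one cannot choose $\beta$, but one replaces $S$ by a high power. The point is that passing from $T_\phi$ to $T_\phi^q$ replaces $\sigma_F$ by the measure $\sigma_F^{(q)}$ with $\widehat{\sigma_F^{(q)}}(n)=\widehat{\sigma}_F(qn)$; if we can show $\widehat{\sigma}_F(n)=O(1/|n|)$ (which holds for \emph{every} ergodic nil-rotation, by the same integration-by-parts argument, with \emph{some} finite constant depending on $\alpha,\beta$), then $\sigma_F^{(q)}$ is absolutely continuous with density $\sum_n\widehat{\sigma}_F(qn)z^n$, and since $\sum_{n\neq0}|\widehat{\sigma}_F(qn)|=O\!\big(\sum_{n\neq0}\frac1{q|n|}\big)$ — wait, that sum diverges, so one needs the slightly sharper bound $\widehat{\sigma}_F(n)=O(1/|n|^{1+\epsilon})$ or, more robustly, one invokes Corollary~\ref{noatAS}: it suffices that $\{\widehat{\sigma}_F(n)\}\in\ell^1$, and then $T_\phi^q$ is SBH for all large $q$. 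Establishing $\ell^1$-summability (equivalently, a decay rate strictly better than $1/|n|$) is the technical heart of the argument; it should follow from a second integration by parts exploiting that $\varphi^{(n)}$ has a quadratic term $\binom{n}{2}\alpha\{y\}$ with derivative of size $\asymp n^2$ in $y$, exactly as the torus-cocycle computations in Section~\ref{ACpert} produced $O(1/|n|^2)$ when there were ``two directions'' to integrate. I expect the main obstacle to be the bookkeeping of the integer-part discontinuities of $\varphi$ when carrying out these two integrations-by-parts rigorously — showing that the jump terms are harmless and that the resulting total-variation bounds really do decay fast enough to land in $\ell^1$.
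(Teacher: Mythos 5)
Your setup (the partition $P_0=\T^2\times[0,\frac12)$, the symmetry $R_{1/2}$ in the centralizer, the Fourier expansion $f=\sum_{m\ \mathrm{odd}}\hat f(m)e^{2\pi imz}$ and the reduction to computing $\widehat{\sigma}_{\xi_m}(n)=\int e^{2\pi im\varphi^{(n)}(x,y)}\,dx\,dy$) matches the paper, but from that point on there is a genuine gap: you treat the $x$-integral as something to be \emph{estimated} by integration-by-parts/variation bounds as in Section~\ref{ACpert}, whereas the point of the paper's proof is that it \emph{vanishes exactly}. The coefficient of $x$ in $\varphi^{(n)}(x,y)$ is the nonnegative integer $k_n(y)=\sum_{j=0}^{n-1}[\{y+j\beta\}+\beta]$, so $\int_0^1 e^{2\pi imk_n(y)x}\,dx=0$ whenever $k_n(y)\neq0$ (recall $m\neq0$). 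Choosing $q$ with $q\beta>1$ forces $k_q(y)\geq1$ for every $y$, hence $k_{nq}(y)\geq1$ for all $n\neq0$, and therefore $\widehat{\sigma}_F(nq)=0$ for all $n\neq0$: the spectral measure of $F$ under $T_\phi^q$ is \emph{exactly Lebesgue}. No decay estimate is needed, and the $\ell^1$-summability problem you identify as ``the technical heart'' of the first assertion simply does not arise.

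This matters because your fallback does not close. A bound $\widehat{\sigma}_F(n)=O(1/|n|)$, even with a small constant, is not absolutely summable and does not yield a bounded (let alone flat) density, so it cannot verify the SBH condition --- this is precisely why Section~\ref{ACpert} passes to a product of two rotations to get $O(1/|n|^2)$. You notice the divergence yourself but defer its resolution to a hoped-for second integration by parts that is never carried out and is not needed. For the second assertion the paper again exploits exact vanishing: if $\frac12<\beta<1$ then $2\beta>1$, so $\widehat{\sigma}_F(n)=0$ for all $|n|\geq2$ and only $n=\pm1$ survives; an explicit computation gives $\widehat{\sigma}_F(1)=\sum_{m\neq0}\hat f(m)e^{2\pi im\alpha}\frac1{2\pi im\alpha}\left(e^{2\pi im\alpha\{1-\beta\}}-1\right)\to0$ as $\beta\to1$, so the density is a degree-one trigonometric polynomial arbitrarily close to~$1$, hence SBH. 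Your step ``shrink the constant in $O(1/|n|)$'' would not deliver that conclusion.
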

\begin{proof}The method for spectral calculations which we apply below
comes from \cite{Fraczek-lem}.

As before we consider the function
$$
F(x,y,z)=\chi_{\T\times\T\times[0,\frac12)}(x,y,z)-\chi_{\T\times\T\times[\frac12,1)}(x,y,z)=
f(z),$$ where $f(z)=2\chi_{[0,\frac12)}(z)-1$, we look at its
Fourier decomposition for $L^2(X)\otimes e^{2\pi imz}$ where
$m\in\Z\setminus\{0\}$ and we have
$$
\hat{f}(m)=\left\{\begin{array}{ccc} 0 & \mbox{if} &
m=2k\\
\frac2{\pi im} &\mbox{if} & m=2k+1.\end{array}\right.$$ It follows
that
$$\langle F\circ T_\phi^n,F\rangle=\sum_{m\neq0}\hat{f}(m)\int\xi_m\circ
T_\phi^n\cdot\ov{\xi}_m \,dxdydz$$$$=\sum_{m\neq0}\hat{f}(m)\int
e^{2\pi im\varphi^{(n)}(x,y)}dxdy.$$ As $F$ is real valued we only
need to consider $n>0$. We have
$$\int e^{2\pi im\varphi^{(n)}(x,y)}\,dxdy=\int c_n(y)\left(\int
e^{2\pi im\left(
x\cdot\sum_{j=0}^{n-1}[\{y+j\beta\}+\beta]\right)}\,dx\right)\,dy.
$$
Notice that if $q\geq1$ is so that
\begin{equation}\label{nil1}q\beta>1\;\;\mbox{then}
\sum_{j=0}^{q-1}[\{y+j\beta\}+\beta]\geq1\;\; \mbox{for each}\;
y\in[0,1).\end{equation} It follows that for each $n\neq0$
$$
\langle F\circ T_\phi^{nq},F\rangle=0$$ and the spectral measure
$F$ for $T_\phi^q$ is purely Lebesgue (cf.\
Corollary~\ref{noatAS}), which completes the first part of the
proposition.

If $\frac12<\beta<1$ in view of~(\ref{nil1}) for $n=2$ (and for
all $n\geq 2$) we have $\langle F\circ T_\phi^n,F\rangle=0$. Hence
we have only to control the case $n=1$. We have
$$
\langle F\circ T_\phi,F\rangle=\sum_{m\neq0}\hat{f}(m)\int e^{2\pi
im\varphi(x,y)}\,dxdy$$$$=\sum_{m\neq0}\hat{f}(m)\int e^{2\pi im
(\alpha\{y\}-(\{x\}+\alpha)[\{y\}+\beta])}dxdy$$
$$
=\sum_{m\neq0}\hat{f}(m)\left(\int_0^{1-\beta} b_m(y)\left(\int
e^{2\pi im\left( x\cdot[\{y\}+\beta]\right)}\,dx\right)\,dy\right.
$$$$\left.+ \int_{1-\beta}^1 b_m(y)\left(\int e^{2\pi im\left(
x\cdot[\{y\}+\beta]\right)}\,dx\right)\,dy)\right)$$
$$
=\sum_{m\neq0}\hat{f}(m)\int_0^{1-\beta} b_m(y)\left(\int e^{2\pi
im\left( x\cdot[\{y\}+\beta]\right)}\,dx\right)\,dy=
\sum_{m\neq0}\hat{f}(m)\int_0^{1-\beta} b_m(y)\,dy$$
$$
=\sum_{m\neq0}\hat{f}(m)e^{2\pi im\alpha}\int_0^{1-\beta}e^{2\pi
im\alpha\{y\}}\,dy=\sum_{m\neq0}\hat{f}(m)e^{2\pi
im\alpha}\frac1{2\pi im\alpha}\left(e^{2\pi
im\alpha\{1-\beta\}}-1\right).$$ Now, the series
$\sum_{m\neq0}\hat{f}(m)e^{2\pi im\alpha}\frac1{2\pi im\alpha}$ is
absolutely summable and the functions $\beta\mapsto e^{2\pi
im\alpha\{1-\beta\}}-1$ are continuous. It follows that if
$1/2<\beta<1$ is sufficiently close to~$1$ then the density of
$\sigma_F$ (which is a trigonometric polynomial of degree~$1$) is
as close to~$1$ as we want, and therefore $\sigma_F$ is SBH, so
the corresponding nil-rotation is not AT.
\end{proof}

\subsection{Gaussian systems and the AT property}\label{gaussian}

In this section we will study the non AT property in the class of
zero entropy Gaussian dynamical systems. We will show the
existence of mixing zero entropy Gaussian systems for which the
4th Cartesian product is not AT. Let us recall the definition of
Gaussian systems.

\begin{defn}
Given a symmetric Borel probability measure $\sigma$ on the circle
$[0,1)$ we call (real) Gaussian system of spectral measure
$\sigma$ the dynamical system
$(\Omega,\mathcal{A},T_{\sigma},\mu)$ where:

\begin{itemize}
  \item $\Omega$ is $\R^{\Z}.$
  \item $\mathcal {A}$ is the borelian $\sigma$-algebra.
  \item $S$ is the shift : ${(T_{\sigma}(\omega))}_{n}=\omega_{n+1}$.
  \item $\mu$ is defined on the cylinder by
  $\mu(\omega_{j_1}\in A_1,\cdots,\omega_{j_n}\in A_n)$ is the probability
  of visiting the set $A_1\times \cdots \times A_n$ for a Gaussian vector
  $(X_{j_1},\cdots,X_{j_n})$ of zero mean and covariances
      \[
      {\rm {Cov}}(X_{j_s},X_{j_t})=\int_{\T} z^{j_s-j_t} d\sigma(z).
      \]
\end{itemize}

\end{defn}
Such a system is then generated by real  stationary (centered)
Gaussian process, namely
 \[
X_n=X_0 \circ T_{\sigma}^{n},~~ n\in \Z, {\rm {~~where~~}}
X_0(\omega)=\omega_0.
\]
The basic account on the spectral analysis of Gaussian dynamical
systems may be found in \cite{Cornfeld}. We recall that the
maximal spectral multiplicity of every Gaussian dynamical systems
is $+ \infty $  or~$1$.

Let us point out that the transformation $S~:~(\omega_n)
\longmapsto (-\omega_n)$ preserves the Gaussian measure and
commute with any Gaussian system. In addition, for the partition
$\mathcal{P}=\{P_0,P_1\}$, given by
 $P_0=\{X_0>0 \}$, we have $SP_0=P_1.$\\
 With this remark in mind we will compute the Fourier coefficients
 of the spectral measure
 $\sigma_{\chi_{{}_{P_0}}-\chi_{{}_{P_1}}}$.

\begin{lemm}\label{arcsin}
Let $X =(X_n)_{n \in \Z}$ be a stationary centered Gaussian
process with spectral measure $\sigma$ satisfying $\hat{\sigma}(n)
\in (-1,1)$ for $n \neq 0$. Then
\[
\mu\{X_0 > 0, X_n > 0)=\frac14+\frac1{2\pi}
\arcsin(\widehat{\sigma}(n)).
\]
\end{lemm}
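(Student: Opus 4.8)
The plan is to reduce the statement to the classical \emph{Sheppard's arcsine formula} for the orthant probability of a bivariate Gaussian vector. First I would observe that the pair $(X_0, X_n)$ is a centered Gaussian vector with covariance matrix $\begin{pmatrix} 1 & r \\ r & 1\end{pmatrix}$, where $r = \operatorname{Cov}(X_0,X_n) = \int_{\T} z^{-n}\,d\sigma(z) = \widehat{\sigma}(-n)$; since $\sigma$ is symmetric this equals $\widehat{\sigma}(n)$, and the hypothesis $\widehat{\sigma}(n)\in(-1,1)$ for $n\neq 0$ guarantees that this matrix is nondegenerate (so the joint density exists) while allowing the borderline correlations to be excluded. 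Note also that $\mu(X_0=0)=0$, so the events $\{X_0>0\}$ and $\{X_0\ge 0\}$ agree up to a null set.

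Next I would prove the orthant probability formula $\mu(X_0>0,\,X_n>0) = \frac14 + \frac1{2\pi}\arcsin r$. The cleanest route is a differentiation-in-the-correlation argument: write $p(r) = \mu(X_0>0,X_n>0)$ for the vector with correlation $r\in(-1,1)$, note the boundary value $p(0) = \frac14$ (independence), and differentiate the Gaussian density with respect to $r$. Using the identity $\frac{\partial}{\partial r}\phi_r(u,v) = \frac{\partial^2}{\partial u\,\partial v}\phi_r(u,v)$ for the standard bivariate normal density $\phi_r$, one gets
\[
p'(r) = \int_0^\infty\!\!\int_0^\infty \frac{\partial^2 \phi_r}{\partial u\,\partial v}(u,v)\,du\,dv = \phi_r(0,0) = \frac1{2\pi\sqrt{1-r^2}}.
\]
Integrating from $0$ to $r$ yields $p(r) = \frac14 + \frac1{2\pi}\arcsin r$. (Alternatively one can use the polar-coordinate computation: the event $\{X_0>0,X_n>0\}$ corresponds, after the linear change of variables diagonalizing the covariance, to an angular sector whose aperture is $\arccos(-r) = \frac\pi2 + \arcsin r$, and the rotational invariance of the isotropic Gaussian in the new coordinates gives probability $\frac{1}{2\pi}(\frac\pi2+\arcsin r) = \frac14 + \frac1{2\pi}\arcsin r$.)

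Finally, substituting $r = \widehat{\sigma}(n)$ gives exactly
\[
\mu\{X_0>0,\,X_n>0\} = \frac14 + \frac1{2\pi}\arcsin\bigl(\widehat{\sigma}(n)\bigr),
\]
which is the claimed identity. The main obstacle here is not conceptual but technical: one must justify differentiating under the double integral sign and check that $\frac{\partial^2\phi_r}{\partial u\,\partial v}$ integrates to the boundary value $\phi_r(0,0)$ over the first quadrant — this is a routine application of dominated convergence together with the fact that $\partial_u\phi_r(u,v)\to 0$ as $u\to\infty$ uniformly in $v$ on $[0,\infty)$, for $r$ in a compact subinterval of $(-1,1)$. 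If one prefers to avoid any analytic subtlety, the polar-coordinate derivation sidesteps differentiation entirely and only requires the elementary geometry of the sector together with the rotational symmetry of the standardized Gaussian.
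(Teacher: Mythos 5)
Your proposal is correct, but it takes a different route from the paper's. The paper also starts from the regression decomposition (it sets $Z_n=X_n-\hat{\sigma}(n)X_0$, notes that $Z_n$ is independent of $X_0$ with variance $1-\hat{\sigma}(n)^2$), but then it conditions on $X_0$ and reduces the orthant probability to the one-dimensional integral $\int_0^{\infty}G(au)G'(u)\,du=\frac14+\frac1{2\pi}\arctan a$ with $a=\hat{\sigma}(n)/\sqrt{1-\hat{\sigma}(n)^2}$, finishing via $\arctan\bigl(r/\sqrt{1-r^2}\bigr)=\arcsin r$. You instead invoke Plackett's identity $\partial_r\phi_r=\partial_u\partial_v\phi_r$ and integrate $p'(r)=\phi_r(0,0)=\frac1{2\pi\sqrt{1-r^2}}$ from $0$ to $r$, or alternatively the sector argument: after standardizing, the quadrant becomes a wedge of aperture $\arccos(-r)=\frac{\pi}{2}+\arcsin r$ and rotational invariance gives the answer. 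All three arguments are standard proofs of Sheppard's formula and all are complete; your polar-coordinate version is arguably the cleanest since it avoids both the differentiation-under-the-integral justification and the paper's auxiliary computation of $\int_0^\infty G(au)G'(u)\,du$, while the paper's conditioning route has the advantage of staying entirely within elementary one-dimensional calculus. Your identification of the correlation as $\hat{\sigma}(n)$ (real, by symmetry of $\sigma$ for a real process) and your use of the nondegeneracy hypothesis are both handled correctly.
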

\begin{proof}
Put $Z_n=X_n-\hat{\sigma}(n)X_0$. It follows that $Z_n$ and $X_0$
are independent, the distribution of $Z_n$  is Gaussian with
variance $1-{\hat{\sigma}(n)}^2$ and for any Borel function
$\phi:\R\times\R\to\R$ we have
\[
\E(\phi(X_0,Z_n)|X_0)=h(Z_n),{~~~~{\rm {where~~}}}
h(z)=\E(\phi(X_0,z)).
\]
Now, by taking $\phi(x,z)=1$ if $x>0$ and $z>-\hat{\sigma}(n)x$,
and 0 otherwise we obtain that
$$
\mu\{X_0 > 0, X_n > 0 \}=\E\left(\E(\chi_{\{X_0 >0\}} \chi_{\{Z_n
> -{\hat{\sigma}(n)}X_0\}}|X_0)\right)$$
$$
=\E\left(\chi_{\{X_0 >0\}}(\E(\chi_{\{Z_n
> -{\hat{\sigma}(n)}X_0\}}|X_0)\right)
$$$$=\E\left(\chi_{\{X_0>0\}}
\mu\left(\frac{Z_n}{\mbox{Var}\,Z_n}
>\frac{-\hat{\sigma}(n)X_0}{\sqrt{1-\hat{\sigma}(n)^2}}\left|X_0\right.\right)\right)$$$$=
\E\left(\chi_{\{X_0
>0\}}\left\{1-G\left (
\frac{-{\hat{\sigma}(n)}X_0}{\sqrt{1-{\hat{\sigma}(n)}^2}}\right)
\right\}\right)
$$
where $\displaystyle G(x)=\frac1{\sqrt{2\pi}}\int_{-\infty}^{x}
e^{-\frac{u^2}2}du$. But, since $1-G(-x)=G(x)$, we get
\begin{eqnarray*}
\mu\{X_0 > 0, X_n > 0 \}&=& \E\left(\chi_{\{X_0 >0\}}
\left\{G \left( \frac{{(\hat{\sigma}(n))}X_0}{\sqrt{1-{\hat{\sigma}(n)}^2}}\right) \right\}\right)\\
&=&\int_{0}^{\infty}\left\{G \left(
\frac{{\hat{\sigma}(n)}u}{\sqrt{1-{\hat{\sigma}(n)}^2}}\right)
\right\} G'(u)du.
\end{eqnarray*}
An easy calculation by taking the derivatives of the following
functions  yields
\[
\int_{0}^{+\infty}G(au)G'(u)du=\frac1{2\pi}\arctan{a}+\frac14,
{\rm {~~~for~~any~~}} a \in \R,
\]
and by taking
$a=\frac{\hat{\sigma}(n)}{\sqrt{1-\hat{\sigma}(n)}^2}$ we
conclude.
\end{proof}

\noindent{}It follows from the lemma above that we have
\[
\widehat{\sigma}_{{\chi_{{}_{P_0}}-\chi_{{}_{P_1}}}}(n)=\frac2{\pi}\arcsin(\widehat{\sigma}(n)).
\]

We now pass to a similar problem but in the Cartesian product $(X
\times X,\mathcal{B},\mu \otimes \mu, T \times T)$ in which we
consider the process $(Y_n)$ where $Y_n=Y_0\circ (T\times T)^n$
and $Y_0(\omega_1,\omega_2)=X_0(\omega_1)X_0(\omega_2)$. We first
must extend Lemma~\ref{arcsin}.

 \begin{lemm}\label{arcsinsquare}
Let $X =(X_n)_{n \in \Z}$ be a stationary  centered Gaussian
process with spectral measure $\sigma$ such that $\hat{\sigma}(n)
\in (-1,1)$ for $n \neq 0$ and let
$Y_n(\omega_1,\omega_2)=X_n(\omega_1)X_n(\omega_2)$. Then
\[
\mu(\{Y_0 > 0, Y_n > 0\})=\frac14+\frac1{\pi^2}
{\arcsin^2(\widehat{\sigma}(n))}.
\]
\end{lemm}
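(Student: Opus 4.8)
The plan is to exploit the product structure. Under $\mu\otimes\mu$ the two coordinates $\omega_1,\omega_2$ are independent, and $\{Y_0>0\}=\{X_0(\omega_1)X_0(\omega_2)>0\}$ is exactly the event that $X_0(\omega_1)$ and $X_0(\omega_2)$ have the same sign; likewise $\{Y_n>0\}$ is the event that $X_n(\omega_1)$ and $X_n(\omega_2)$ have the same sign. Since $\widehat{\sigma}(0)=1$, each $X_k$ is a standard Gaussian variable, so $\{X_0=0\}$ and $\{X_n=0\}$ are null and all these signs are a.s.\ well defined in $\{\pm1\}$. Writing, for the one-coordinate Gaussian law, $p=\mu(X_0>0,X_n>0)$ and $q=\mu(X_0>0,X_n<0)$, and conditioning on the sign pattern of the first coordinate and using independence,
\[
\mu(\{Y_0>0,\,Y_n>0\})=\sum_{\delta_0,\delta_n\in\{\pm1\}}\mu\big(\mathrm{sign}\,X_0=\delta_0,\ \mathrm{sign}\,X_n=\delta_n\big)^2 .
\]

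Next I would use that the transformation $(\omega_n)\mapsto(-\omega_n)$ preserves the Gaussian measure --- this is the map $S$ recalled just before Lemma~\ref{arcsin}. It identifies the four summands pairwise: the two ``equal-sign'' patterns each have probability $p$ and the two ``opposite-sign'' patterns each have probability $q$, with $2p+2q=1$. Hence the displayed sum equals $2p^2+2q^2$. By Lemma~\ref{arcsin}, $p=\tfrac14+\tfrac1{2\pi}\arcsin(\widehat{\sigma}(n))$.

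It then remains only to compute. With $q=\tfrac12-p$ one has $2p^2+2q^2=4p^2-2p+\tfrac12$; substituting $p=\tfrac14+\tfrac1{2\pi}\arcsin(\widehat{\sigma}(n))$, the linear terms cancel and what survives is $\tfrac14+\big(\tfrac1{\pi}\arcsin(\widehat{\sigma}(n))\big)^2$, which is the asserted formula. There is essentially no obstacle here; the only points deserving a word of care are the use of the symmetry of the Gaussian measure to reduce the four sign-pattern probabilities to the two numbers $p,q$, and the (trivial) verification that the boundary events $\{X_0=0\}$, $\{X_n=0\}$ are null, so that ``$Y>0$'' and ``same sign'' agree modulo a null set.
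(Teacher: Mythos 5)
Your proof is correct and follows essentially the same route as the paper: the same decomposition of $\mu\otimes\mu\{Y_0>0,Y_n>0\}$ into the sum of squares of the four sign-pattern probabilities (via independence of the two coordinates), the same use of the symmetry $S$ to reduce these to the two values $p,q$ with $2p+2q=1$, and the same substitution of Lemma~\ref{arcsin}. The final algebra checks out, so nothing further is needed.
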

\begin{proof} It is easy to see that
\begin{eqnarray*}
\mu \otimes \mu \{Y_0 >0, Y_n>0\}&=&{\left(\mu \{X_0>0,X_n
>0\}\right)}^2+{\left(\mu \{X_0<0,X_n
>0\}\right)}^2\\&+&{\left(\mu \{X_0>0,X_n <0\}\right)}^2+
{\left(\mu \{X_0<0,X_n <0\}\right)}^2.
\end{eqnarray*}
Using Lemma \ref{arcsin} and the fact that $S\in C(T)$ we obtain
that
$$\mu (\{X_0<0,X_n <0\})=\mu(\{S\{X_0<0,X_n<0\}\})
$$$$=\mu(\{X_0>0,X_n>0\})=\frac14+\frac1{2\pi}
\arcsin(\widehat{\sigma}(n)),$$ and then
\begin{eqnarray*}
\mu \{X_0<0,X_n >0\}&=&\mu \{X_0<0\}-\mu \{X_0<0,X_n <0\}=\frac14-\frac1{2\pi} \arcsin(\widehat{\sigma}(n)),\\
\mu \{X_0<0,X_n >0\}&=&\frac14-\frac1{2\pi}
\arcsin(\widehat{\sigma}(n)).
\end{eqnarray*}
\noindent{} and the proof of the lemma is complete.
\end{proof}

Since $X_n(\omega)=\omega_n$
($\omega=(\omega_n)_{n\in\Z}\in\R^{\Z}$) and $S\times Id$ is in
the centralizer of $T\times T$,
$$
\mu\otimes\mu(\{Y_0<0,Y_n<0\})=\mu\otimes\mu(S\times\,Id(\{Y_0<0,Y_n<0\}))$$$$=
\mu\otimes\mu(\{Y_0>0,Y_n>0\})=\frac14+\frac1{\pi^2}
{\arcsin(\widehat{\sigma}(n))}^2$$ and since
$\mu\otimes\mu(\{Y_0<0\})=\frac12$,
$$
\mu\otimes\mu(\{Y_0<0,Y_n>0\})=\mu\otimes\mu(\{Y_0>0,Y_n<0\})=\frac14-\frac1{\pi^2}
{\arcsin(\widehat{\sigma}(n))}^2.$$ Consider
$\overline{T}=(T\times T)\times(T\times T)$ with measure
$\overline{\mu}=(\mu\otimes\mu)\otimes(\mu\otimes\mu)$ and the
process $(Z_n)_{n\in\Z}$ where $Z_n=Z_0\otimes\overline{T}^n$ and
$$
Z_0(\omega^{(1)},\omega^{(2)},\omega^{(3)},\omega^{(4)})=
Y_0(\omega^{(1)},\omega^{(2)})Y_0(\omega^{(3)},\omega^{(4)})).$$
By the argument from the beginning of the proof of
Lemma~\ref{arcsinsquare} it follows that
$$
\overline{\mu}(\{Z_0>0,Z_n>0\})=\frac14+\frac4{\pi^4}\arcsin^4(\hat{\sigma}(n)).$$
Therefore if we put $P'_0=\{Z_0<0\}$ and $P_1'=\{Z_0\geq0\}$ then
for any $n \in \Z$ we have
\[
\widehat{\sigma}_{{\chi_{{}_{P'_0}}-\chi_{{}_{P'_1}}}}(n)=\frac{16}{\pi^4}{(\arcsin(\widehat{\sigma}(n)))}^4.
\]
Finally notice that $P_1'=S\times Id\times Id\times Id(P_0')$,
where $S\times Id\times Id\times Id\in C(\overline{T})$.

Before we formulate  the main result of this section we recall the
following result of K\"{o}rner \cite{Korner} (which is a
strengthening of a result of Iva\v{s}\"{e}v-Musatov
\cite{brownhewitt}).

\begin{thm}\label{korner}
Assume that $\phi:[1,\infty]\to[1,\infty)$ is a continuous
positive function such that\\
(i) $\int_1^\infty \phi(x)^2\,dx=+\infty,$\\
(ii) $\left(\exists K>1\right)\;\;\;K\phi(x)\geq \phi(y)\geq
\phi(x)/K\;\;\mbox{for}\;2x\geq y\geq x\geq1$.

Then there exists a singular probability measure $\sigma$ on $\T$
such that for each $n\neq0$,
$$\left|\widehat{\sigma}(n)\right|\leq\phi\left(|n|\right).$$
\end{thm}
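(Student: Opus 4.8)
Since $|\widehat{\sigma}(n)|\le1$ for every probability measure, the statement is interesting only when $\phi(x)\to0$, and in that regime hypothesis (i) is essentially forced: if $\phi\in L^2([1,\infty))$ then $|\widehat{\sigma}(n)|\le\phi(|n|)$ puts $(\widehat{\sigma}(n))_n$ in $\ell^2$ and $\sigma$ is absolutely continuous. So the plan is to use (i) and (ii) to realize $\sigma$ as a weak-$*$ limit of absolutely continuous probability measures of generalized Riesz-product type, in the spirit of Iva\v{s}\"{e}v-Musatov and K\"{o}rner. I would first replace $\phi$ by a comparable, more convenient function: by (ii) the oscillation of $\phi$ over a doubling is bounded by the fixed factor $K$, so with $a_k=\phi(2^k)$ one has $\int_1^\infty\phi^2\asymp\sum_{k\ge0}2^ka_k^2$, which diverges by (i), while $a_k/a_{k+1}\in[1/K,K]$. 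After dividing $\phi$ by a fixed constant (undone at the end, since this only strengthens the conclusion) I may then pretend $\phi\equiv a_k$ on $[2^k,2^{k+1})$, with $\sum2^ka_k^2=+\infty$ and $a_k$ slowly varying.

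Next, the construction. I would fix a very lacunary sequence of frequency blocks $J_k=[L_k,R_k]$ with $R_k\ll L_{k+1}$, and in each $J_k$ choose a $B_2[g_k]$-set $\Lambda_k$ (every non-zero difference has at most $g_k$ representations) of cardinality $s_k$; such a set can be placed inside an interval of length of order $s_k^2/g_k$, which I take to be the order of $R_k-L_k$. Setting $Q_k(x)=s_k^{-1/2}\sum_{m\in\Lambda_k}e(mx)$ and $p_k=|Q_k|^2\ge0$, one has $\widehat{p_k}(0)=1$ and $\widehat{p_k}(d)=r_k(d)/s_k$, where $r_k(d)$ counts representations $d=m-m'$ in $\Lambda_k$; hence $0\le\widehat{p_k}(d)\le g_k/s_k$ for $d\ne0$ and $\widehat{p_k}$ is supported in $\{|d|\le R_k-L_k\}$. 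The integers $s_k,g_k$ and the widths $R_k-L_k$ are the free parameters, and I would tune them against $a_k$ so that simultaneously $g_k/s_k\le\tfrac12\phi(|d|)$ for every $d\ne0$ in the support of $\widehat{p_k}$, and $\sum_k\|p_k-1\|_2^2=+\infty$. Since $\|p_k-1\|_2^2=\sum_{d\ne0}r_k(d)^2/s_k^2$ is of order $g_k$ for a nearly extremal representation function, balancing the two constraints lets a block at scale $N$ contribute up to order $N\phi(N)^2$ to this sum, so that summing over dyadic scales reproduces exactly $\int\phi^2=+\infty$ --- this is where hypothesis (i) is consumed. Finally I would set $Q^{(N)}=\prod_{k\le N}p_k$; these are probability densities, and, the blocks $J_k$ being lacunary enough, the measures $Q^{(N)}\,dx$ converge weak-$*$ to a probability measure $\sigma$.

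It would remain to carry out the two verifications. For the Fourier estimate I would expand the product, $\widehat{\sigma}(n)=\sum\prod_{k\in F}\widehat{p_k}(d_k)$ over finite index sets $F$ and integers $d_k\ne0$ ($k\in F$) with $\sum_{k\in F}d_k=n$; the lacunarity ensures that for each $n$ at most one block can produce a single-factor term, bounded by $\tfrac12\phi(|n|)$, and that all multi-factor terms are dominated, using the lacunarity together with the decay of $g_k/s_k$, by a geometric series summing to at most $\tfrac12\phi(|n|)$; undoing the initial constant gives $|\widehat{\sigma}(n)|\le\phi(|n|)$. For singularity, the lacunarity also makes the spectra of the $p_k$ dissociated, so $\|Q^{(N)}\|_2^2=\prod_{k\le N}\|p_k\|_2^2=\prod_{k\le N}\bigl(1+\|p_k-1\|_2^2\bigr)\to\infty$, and the Riesz-product dichotomy of Zygmund and Peyri\`ere (in the form valid for products of non-negative trigonometric polynomials with dissociated spectra) upgrades this to $\sigma\perp$ Lebesgue measure.

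The hard part will be the tuning step: arranging the $B_2[g_k]$-sets, the amplitudes and the block widths so that the coefficients of every factor are uniformly below $\tfrac12\phi$ while still $\sum_k\|p_k-1\|_2^2=+\infty$, \emph{and} so that all the multi-factor cross terms stay below the threshold simultaneously at every frequency. This quantitative balancing is delicate precisely on the borderline of hypothesis (i), and it is the technical core of Iva\v{s}\"{e}v-Musatov's and K\"{o}rner's arguments; doing it cleanly either requires a careful running account of all implied constants through the difference-set estimates, or --- as K\"{o}rner does --- replacing the deterministic sets $\Lambda_k$ and the $\pm1$-type coefficients by a random choice and invoking Salem--Zygmund / large-deviation bounds to force the cross terms below the threshold almost surely, which is technically the most economical route.
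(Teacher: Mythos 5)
First, a point of reference: the paper contains no proof of this statement. Theorem~\ref{korner} is quoted from K\"orner's paper \cite{Korner} (itself a strengthening of Iva\v{s}\"ev--Musatov), and is used as a black box in the proof of Proposition~\ref{GnoAT}. So there is no in-paper argument to compare against; you were in effect asked to reprove a genuinely hard theorem of harmonic analysis. Your text correctly identifies the right circle of ideas --- a generalized Riesz product, the dyadic reduction via hypothesis (ii), and the scale count matching the contribution $N\phi(N)^2$ of a block at scale $N$ against $\int_1^\infty\phi^2=\infty$ --- and you are candid that the ``tuning step'' is deferred. That already means this is an outline, not a proof.

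Beyond the admitted incompleteness, the specific construction you propose is structurally flawed. If $Q_k(x)=s_k^{-1/2}\sum_{m\in\Lambda_k}e(mx)$ with $\Lambda_k\subset J_k=[L_k,R_k]$, then the spectrum of $p_k=|Q_k|^2$ is the difference set $\Lambda_k-\Lambda_k$, which is an interval of length about $2(R_k-L_k)$ \emph{around the origin} --- not a subset of the lacunary block $\pm J_k$. Consequently: (a) the spectra of the $p_k$ are nested rather than dissociated, so $\|\prod_{k\le N}p_k\|_2^2=\prod_{k\le N}\|p_k\|_2^2$ fails and the Zygmund--Peyri\`ere dichotomy does not apply; (b) for a fixed $n\neq0$, \emph{every} block with $R_k-L_k\ge|n|$ contributes a single-factor term $\widehat{p_k}(n)$, so the claim that at most one block does is false, and even in the favorable case $\phi(x)=c/\sqrt{x}$ the sum $\sum_{2^k\ge|n|}g_k/s_k$ already exceeds the target $\tfrac12\phi(|n|)$; (c) weak-$*$ convergence is not guaranteed by lacunarity of the $J_k$, since $\sum_d|\widehat{p_k}(d)|=s_k$ is large and the low-frequency coefficients never stabilize. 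The standard and correct shape of the construction uses factors $p_k=1+(\mbox{real trigonometric polynomial with spectrum in }\pm J_k)$; the entire difficulty of Iva\v{s}\"ev--Musatov/K\"orner is then to make such factors nonnegative while keeping $\sup_d|\widehat{p_k}(d)|$ below $\phi$ on the block and $\sum_k\|p_k-1\|_2^2=+\infty$ --- precisely the step you postpone. So the proposal neither matches the paper (which has no proof) nor constitutes a self-contained proof, and its concrete skeleton would need to be replaced, not merely tuned.
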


\vspace{2ex}

 We easily verify that for each $c>0$ the function
$\phi(x)=c/\sqrt{x}$ satisfies the assumptions of
Theorem~\ref{korner}.

Notice that if $\sigma$ satisfies the assertion of
Theorem~\ref{korner} then so does the measure
$\tilde{\sigma}(A)=\sigma(\overline{A})$  (since
$|\hat{\sigma}(-n)|=|\hat{\sigma}(n)|$) and also \beq\label{sy}
\left|\left(\frac12(\sigma+\tilde{\sigma}\right)\hat{}(n)\right|\leq\phi(|n|).\eeq
In other words the assertion of Theorem~\ref{korner} holds in the
class of symmetric measures.

\begin{prop}\label{GnoAT}
There exists a mixing Gaussian zero entropy dynamical system
$(X,\mathcal{B},T,\mu)$ such that $T\times T\times T\times T$ is
not AT.
\end{prop}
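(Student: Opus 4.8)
The plan is to combine K\"orner's theorem (Theorem~\ref{korner}) with the non-AT criterion of Proposition~\ref{noat}, applied not to $T$ itself but to the fourth Cartesian power $\overline{T}=(T\times T)\times(T\times T)$, using the partition $\{P_0',P_1'\}$ built above from the process $(Z_n)$. The point is that we have already computed
\[
\widehat{\sigma}_{\chi_{P_0'}-\chi_{P_1'}}(n)=\frac{16}{\pi^4}\bigl(\arcsin(\widehat{\sigma}(n))\bigr)^4,
\]
and that $P_1'=S\times Id\times Id\times Id(P_0')$ with $S\times Id\times Id\times Id\in C(\overline{T})$, so condition~(i) of Proposition~\ref{noat} is automatic. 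It therefore remains only to choose $\sigma$ so that (a) the Gaussian system $T=T_\sigma$ is mixing with zero entropy, and (b) the spectral measure $\sigma_{\chi_{P_0'}-\chi_{P_1'}}$ is an SBH measure.

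First I would fix, via Theorem~\ref{korner} (in its symmetric form~(\ref{sy})), a symmetric singular probability measure $\sigma$ on $\T$ with $|\widehat{\sigma}(n)|\leq c/\sqrt{|n|}$ for $n\neq0$, where $c>0$ is a small constant to be specified. Since $\sigma$ is continuous (it is singular, in particular non-atomic, and in fact Rajchman-like decay forces $\widehat\sigma(n)\to0$), the Gaussian system $T_\sigma$ is mixing; being Gaussian it has zero entropy. Then I would estimate the Fourier coefficients of $\sigma_{\chi_{P_0'}-\chi_{P_1'}}$: using $|\arcsin t|\leq \frac{\pi}{2}|t|$ for $|t|\leq 1$ (say $|\arcsin t|\le 2|t|$ is enough) we get
\[
\bigl|\widehat{\sigma}_{\chi_{P_0'}-\chi_{P_1'}}(n)\bigr|\leq \frac{16}{\pi^4}\Bigl(\frac{\pi}{2}\Bigr)^4|\widehat{\sigma}(n)|^4\leq \frac{16}{\pi^4}\Bigl(\frac{\pi}{2}\Bigr)^4\frac{c^4}{n^2}=\frac{C\,c^4}{n^2}
\]
for $n\neq0$, which is summable. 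Hence $\sigma_{\chi_{P_0'}-\chi_{P_1'}}$ is absolutely continuous with continuous density $d(z)=1+\sum_{n\neq0}\widehat{\sigma}_{\chi_{P_0'}-\chi_{P_1'}}(n)z^n$, and $\sup_z|d(z)-1|\leq \sum_{n\neq0}Cc^4/n^2 = C'c^4$. Choosing $c$ small enough that $C'c^4<\varepsilon_0$ gives $\sup_z d(z)<1+\varepsilon_0$, so by the observation following the definition of SBH measures, $\sigma_{\chi_{P_0'}-\chi_{P_1'}}$ is an SBH measure. Proposition~\ref{noat} applied to $\overline{T}$ then yields that $\overline{T}$ is not AT; and since $\overline{T}$ is (isomorphic to) $T\times T\times T\times T$ up to reordering coordinates, $T\times T\times T\times T$ is not AT.

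The only genuinely delicate point is making sure the zeroth Fourier coefficient is exactly $1$, i.e.\ that $\mu(P_0')=\mu(P_1')=\tfrac12$: this is where the symmetry $P_1'=S\times Id\times Id\times Id(P_0')$ with the measure-preserving involution $S\times Id\times Id\times Id$ is used, exactly as in condition~(i) of Proposition~\ref{noat}; it also relies on $\widehat{\sigma}(n)\in(-1,1)$ for $n\neq0$ so that Lemmas~\ref{arcsin} and~\ref{arcsinsquare} apply, which holds because $\sigma$ is continuous with $\widehat\sigma(n)\to0$ and (shrinking $c$ further if need be) $|\widehat\sigma(n)|<1$ for all $n\neq0$. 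Everything else is a routine verification, so I do not expect real obstacles beyond bookkeeping the constants.
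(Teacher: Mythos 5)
Your proof is correct and follows essentially the same route as the paper's: Theorem~\ref{korner} (symmetrized via~(\ref{sy})) supplies a singular symmetric $\sigma$ with $|\widehat{\sigma}(n)|\leq c/\sqrt{|n|}$, the precomputed identity $\widehat{\sigma}_{\chi_{P_0'}-\chi_{P_1'}}(n)=\frac{16}{\pi^4}\arcsin^4(\widehat{\sigma}(n))$ gives an absolutely summable Fourier transform with total mass $O(c^4)$ off the zeroth coefficient, hence an absolutely continuous spectral measure with density uniformly close to $1$ and therefore SBH for $c$ small, and Proposition~\ref{noat} applied to $\overline{T}$ with the partition $\{P_0',P_1'\}$ finishes the argument. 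One small slip worth fixing: ``being Gaussian it has zero entropy'' is not a valid justification, since a Gaussian system has entropy $0$ or $+\infty$ according as its spectral measure is singular or has an absolutely continuous component; the zero-entropy conclusion holds here precisely because K\"orner's measure $\sigma$ is \emph{singular}, which is the real reason Theorem~\ref{korner} (rather than, say, Lebesgue measure) is needed.
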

\begin{proof} Using Theorem~\ref{korner} (and~(\ref{sy})) we can find $\sigma$ a probability
symmetric singular measure on the circle $\T$ such that for
$n\geq1$ \begin{eqnarray}\label{dominecoef} |\widehat{\sigma}(n)|
\leq  \frac{c}{\sqrt{n}}.
\end{eqnarray}
The constant $c>0$  has to be small enough so that for $|x|\leq
\frac c{\log2}$ we have $|\arcsin x|\leq 2|x|$ and we assume that
$$
c\leq \pi^{1/2}\left(\frac{1+\vep_0}{86}\right)^{1/4}.$$ We now
have
$$
\sum_{k\neq0}
|\widehat{\sigma}_{{\chi_{{}_{P'_0}}-\chi_{{}_{P'_1}}}}(k)|\leq
\frac{32}{\pi^4}\sum_{k \geq1}(\arcsin(\widehat{\sigma}(k)))^4\leq
\frac{512}{\pi^4}\sum_{k \geq1}(\widehat{\sigma}(k))^4$$$$ \leq
\frac{512c^4}{\pi^4}\sum_{k \geq1}\frac1{k^2}\leq 1+\vep_0.
$$
It follows that ${\sigma}_{{\chi_{{}_{P'_0}}-\chi_{{}_{P'_1}}}}$
is an SBH measure and the proof of the proposition is complete.
\end{proof}

{\bf Question.} Is it true that for {\em every} automorphism $T$
its Cartesian square $T\times T$ does not have the AT property?

\begin{rem} In the recent paper \cite{Tkorner}, K\"orner shows
that given $\alpha\in(\frac12,1)$ there is a singular measure
$\mu$ on the circle such that $\mu\ast\mu$ is absolutely
continuous with density of Lipshitz class $\alpha-\frac12$. However the
measure $\mu$ is not symmetric. If K\"orner's construction can be
``symmetrized" then we would obtain a zero entropy Gaussian
automorphism whose Cartesian square $T\times T$ is not AT.
\end{rem}

\subsection{Gaussian cocycles and non AT property}

Following \cite{Le-Le-Sk} in this section we consider extensions
of Gaussian systems via Gaussian cocycles. So we assume that
$(X_n)_{n\in\Z}$ is a stationary centered real Gaussian process
inducing the dynamical system $T=T_\sigma$ acting on
$(\Omega,\mu_\sigma)$, where $\Omega=\R^\Z$ and the probability
measure $\sigma$, always assumed to be continuous, is the spectral
measure of the process (and we can assume that
$X_n(\omega)=\omega_n$). We then consider the skew product
$T_{e^{2\pi iX_0}}$ acting on
$(\Omega\times\es^1,\mu_\sigma\otimes\lambda)$.

It has been proved in \cite{Le-Le-Sk}  that if
$\hat{\sigma}(n)\geq0$ for each $n\in\Z$ then $T_{\exp(2\pi
iX_0)}$ has countable Lebesgue spectrum in the orthocomplement of
$L^2(\Omega,\mu_\sigma)\otimes1$. Based on the proof of this
result we will now show the following.

\begin{prop}Assume that $\hat{\sigma}(n)\geq0$ for each $n\in\Z$. Then there exists $m_0$ such that for each $m\geq m_0$,
$T^m_{\exp(2\pi iX_0)}$ is not AT.
\end{prop}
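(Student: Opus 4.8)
The plan is to apply the criterion of Proposition~\ref{noat} to the partition $\mathcal{P}=\{P_0,P_1\}$ of $\Omega\times\es^1$ given by $P_0=\{X_0>0\}\times A$, $P_1=(\{X_0>0\}\times A)^c$, for a suitable arc $A\subset\es^1$ of length $\frac12$, or rather to an analogous partition built from the ``$Z$-type'' process that already appeared above. First I would recall from \cite{Le-Le-Sk} the spectral description of $T_{\exp(2\pi iX_0)}$: decomposing $L^2(\Omega\times\es^1)=\bigoplus_{r\in\Z}L^2(\Omega,\mu_\sigma)\otimes\chi_r$ into $U_{T_{\exp(2\pi iX_0)}}$-invariant pieces, on the $r$-th piece the operator acts as $f\mapsto e^{2\pi irX_0}f\circ T$, and the hypothesis $\hat\sigma(n)\ge0$ forces the spectral measure of $1\otimes\chi_r$ (for $r\neq0$) to be absolutely continuous; in fact the relevant Fourier coefficients are of the form $\int e^{2\pi irX_0^{(n)}}\,d\mu_\sigma = e^{-2\pi^2 r^2\operatorname{Var}X_0^{(n)}/2}\cdot(\text{oscillatory factor})$, where $\operatorname{Var}X_0^{(n)}=\sum_{|j|<n}(n-|j|)\hat\sigma(j)$ grows, so these decay fast in $n$.

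Next I would identify the function $\chi_{P_0}-\chi_{P_1}$ with an explicit element of $\bigoplus_{r\neq0}L^2(\Omega)\otimes\chi_r$: writing $\chi_A-\chi_{A^c}=\sum_{r\neq0}c_r\chi_r$ with $c_r=O(1/|r|)$ (as for the half-circle indicator computed in Section~\ref{ACpert}), one gets $\chi_{P_0}-\chi_{P_1}=(2\chi_{\{X_0>0\}}-1)\cdot\sum_{r\neq0}c_r\chi_r$ up to a harmless relabelling, so its spectral measure is a weighted sum of the spectral measures of the pieces $(2\chi_{\{X_0>0\}}-1)\otimes\chi_r$. Each such spectral measure has Fourier coefficients $\langle V_r^n(2\chi_{\{X_0>0\}}-1),2\chi_{\{X_0>0\}}-1\rangle$ with $V_r(h)=e^{2\pi irX_0}h\circ T$; expanding $2\chi_{\{X_0>0\}}-1$ in Hermite functions of the Gaussian $X_0$ and using the Gaussian computation (the $r$-chaos factor contributes $e^{-2\pi^2r^2\operatorname{Var}X_0^{(n)}/2}$), I would show these coefficients are $O(\rho(n))$ uniformly, where $\rho(n)\to0$, and then, passing from $T$ to $T^m$, that $\operatorname{Var}X_0^{(mn)}\to\infty$ as $m\to\infty$ uniformly in $n\neq0$, so that for $m$ large enough $\sum_{n\neq0}|\hat\sigma_{\chi_{P_0}-\chi_{P_1}}^{(m)}(n)|$ is $\le\varepsilon_0$. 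This gives an absolutely continuous spectral measure with density $<1+\varepsilon_0$, i.e.\ an SBH measure, and Proposition~\ref{noat} (via Corollary~\ref{noatAS}) yields that $T^m_{\exp(2\pi iX_0)}$ is not AT.

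For condition (i) of Proposition~\ref{noat} I would exhibit $S$ in the centralizer with $SP_0=P_1$: the map $S(\omega,t)=(\omega,t+\frac12)$ (rotation by $\frac12$ in the fibre) commutes with $T_{\exp(2\pi iX_0)}$ and sends the arc $A=[0,\frac12)$ to its complement, hence $SP_0=P_1$ and $\mu(P_0)=\mu(P_1)=\frac12$; if the computation forces instead the ``sign flip'' $S(\omega,t)=(-\omega,-t)$ one checks directly it lies in $C(T_{\exp(2\pi iX_0)})$ because $\exp(2\pi i(-X_0))=\overline{\exp(2\pi iX_0)}$ and it reverses the partition. The main obstacle is the spectral estimate in the middle step: one must control $\langle V_r^n h,h\rangle$ for the \emph{non-smooth} function $h=2\chi_{\{X_0>0\}}-1$ (whose Hermite expansion has only slowly decaying coefficients), uniformly over all $r\neq0$ and all $n$, and then show the bound becomes summable and small after replacing $T$ by a high power $T^m$ — this requires the positivity hypothesis $\hat\sigma(n)\ge0$ precisely to guarantee $\operatorname{Var}X_0^{(n)}$ is increasing and unbounded, and it is where the proof in \cite{Le-Le-Sk} must be reused and quantified rather than merely cited.
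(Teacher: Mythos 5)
Your overall strategy (produce a partition whose associated spectral measure is absolutely continuous with absolutely summable Fourier transform, then invoke Corollary~\ref{noatAS}) is the right one, but your choice of partition creates a problem that the paper simply avoids, and you leave that problem unresolved. First, the partition $P_0=\{X_0>0\}\times A$, $P_1=P_0^c$ does not satisfy condition (i) of Proposition~\ref{noat}: $\mu(P_0)=\tfrac14$, not $\tfrac12$, and no element of the centralizer can carry $P_0$ to $P_1$. Your fallback, the ``$Z$-type'' set with $\chi_{P_0}-\chi_{P_1}=(2\chi_{\{X_0>0\}}-1)\cdot(\chi_A-\chi_{A^c})$, does fix the measure, but it forces you to estimate $\langle V_r^n h,h\rangle$ for $h=2\chi_{\{X_0>0\}}-1$, i.e.\ correlations of the sign of a Gaussian twisted by $e^{2\pi irX_0^{(n)}}$, uniformly in $r$ and $n$. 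You correctly identify this as the main obstacle, but you do not carry it out, and it is not a routine step: the Hermite expansion of the sign function mixes all odd chaoses, and the needed uniform decay does not follow from the chaos-by-chaos heuristic you sketch. As it stands the argument is incomplete at its central point.

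The paper's proof removes the obstacle at the source: it takes $P_0=\Omega\times A$, $P_1=\Omega\times A^c$, a partition depending \emph{only} on the fibre coordinate. Then $F=\chi_{P_0}-\chi_{P_1}=f(z)=\sum_{r\neq 0}\hat f(r)z^{r}$ with $\hat f(r)=O(1/|r|)$, the map $S(\omega,z)=(\omega,-z)$ lies in the centralizer and exchanges $P_0$ and $P_1$, and
\[
\hat\sigma_F(n)=\sum_{r\neq0}\hat f(r)\int_\Omega e^{2\pi i rX_0^{(n)}}\,d\mu_\sigma
=\sum_{r\neq0}\hat f(r)\,e^{-2\pi^2r^2\|X_0^{(n)}\|_2^2},
\]
a clean Gaussian characteristic function with no sign-function correction. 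The hypothesis $\hat\sigma(j)\geq0$ gives $\|X_0^{(n)}\|_2^2=\sum_{|j|<n}(n-|j|)\hat\sigma(j)\geq |n|$, whence $|\hat\sigma_F(n)|\leq Ce^{-c|n|}$, and Corollary~\ref{noatAS} applies immediately. So the lesson is that the criterion does not require the partition to see the Gaussian coordinate at all; choosing it in the fibre turns your ``main obstacle'' into a one-line computation.
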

\begin{proof} Take the partition $(A,A^c)$ of $\es^1$ into the upper and the
lower semicircle and consider $(\Omega\times A,\Omega\times A^c)$.
As in Section~\ref{ACpert} we notice that the Fourier
decomposition of $F(\omega,z)=(\chi_{\Omega\times
A}-\chi_{\Omega\times A^c})(\omega,z)$ is of the form $$
F(\omega,z)=f(z)=\sum_{k=-\infty}^\infty \frac2{(2k+1)\pi
i}z^{2k+1}.$$ Proceeding as in Section~\ref{ACpert}  we obtain
that
\begin{equation}\label{est1}
\left|\widehat{\sigma}_F(n)\right|=\left|\sum_{m\neq0}\hat{f}(m)\int_\Omega
e^{2\pi imX_0^{(n)}}\,d\mu_\sigma\right|.\end{equation} An
elementary calculation using the fact that the Fourier transform
of $\sigma$ is positive (see \cite{Le-Le-Sk}) shows that
$$
\|X_0^{(n)}\|^2_2\geq |n|\hat{\sigma}(0)=|n|$$ and therefore
$$ \left|\int_\Omega e^{2\pi imX_0^{(n)}}\,d\mu\right|=
e^{-2\pi^2m^2\|X_0^{(n)}\|_2^2}\leq e^{-Cm^2|n|}.$$ In view of
(\ref{est1}) it follows that the Fourier transform of $\sigma_F$
still decreases exponentially and the assertion follows from
Corollary~\ref{noatAS}.
\end{proof}

\begin{rem}
It follows that for each Gaussian system $T_\sigma$ where
$\sigma=\eta\ast\eta$ we have a Gaussian cocycle such that the
corresponding skew product has a power which is not AT.
\end{rem}

\begin{thank}\em
The first author would like to express thanks to J-P. Thouvenot
and B. Host for formulating to him  the question about existence
of finite multiplicity non-AT transformation. It is also a
pleasure for him to express thanks to the university of Toru\'n
and to the organizers of the Grefi-Mefi colloque ``From dynamics
to statistical mechanics, Luminy, CIRM, 2008" where a part of this
work has been done.

The second author would like to thank Emmanuel Lesigne and Anthony
Quas for fruitful discussions on the subject.
\end{thank}

\bibliographystyle{amsplain}
\setlength{\parsep}{0cm} \small

\end{document}